\numberwithin{equation}{section}
\numberwithin{figure}{section}
\newcommand{\ldz}{L_{2}^{3}}
\newcommand{\R}{\mathbb R}
\newcommand{\Z}{\mathbb Z}
\newcommand{\N}{\mathbb N}
\DeclareMathOperator{\Sym}{Sym}
\newcommand{\betrag}[1]{\left|#1\right|}
\DeclareMathOperator{\PPP}{PP}
\DeclareMathOperator{\POO}{P}
\DeclareMathOperator{\MM}{L}
\DeclareMathOperator{\LL}{L}
\DeclareMathOperator{\Star}{Star}
\newcommand{\PPk}[1]{\PPP^{k}(#1)}
\newcommand{\PPaffk}[1]{\PPP^{k}(#1)}
\newcommand{\PPO}[2]{\PPP^{#1}(#2)}
\newcommand{\PPOaff}[2]{\PPP^{#1}(#2)}
\newcommand{\MPPOaff}[2]{\MM\PPP^{#1 -1}(#2)}
\newcommand{\PO}[2]{\POO^{#1}(#2)}
\newcommand{\MPP}[1]{\MM\PPP^{k-1}(#1)}
\newcommand{\MPPaff}[1]{\MM\PPP^{k-1}(#1)}
\newcommand{\PPa}[1]{\PPP^{\ast}(#1)}
\newcommand{\Zaffd}{Z^{\text{\rm{fan}}}_{d}}
\newcommand{\Zaff}[1]{Z^{\text{\rm{fan}}}_{#1}}
\newcommand{\Zy}{Z}
\newcommand{\AAA}{C}
\newcommand{\Aa}[1]{\AAA^{\ast}(#1)}
\newcommand{\Ak}[2]{\AAA^{#1}(#2)}
\newcommand{\Zk}[2]{\Zy_{#1}(#2)}
\DeclareMathOperator{\Div}{Div}
\newcommand{\curlyx}{\mathcal{X}}
\newcommand{\curlyy}{\mathcal{Y}}
\newcommand{\curlyc}{\mathcal{C}}
\DeclareMathOperator{\trop}{B}
\DeclareMathOperator{\cl}{cl}
\DeclareMathOperator{\rank}{r}
\DeclareMathOperator{\Att}{A}
\newcommand{\AT}{\Att_{\text{T}}}
\newcommand{\arxiv}[1]{%
  \href{http://arxiv.org/abs/#1}{arxiv:#1}%
}
\newtheorem{theorem}{Theorem}[section]
\newtheorem{lemma}[theorem]{Lemma}
\newtheorem{defthm}[theorem]{Definition and Theorem}
\newtheorem{proposition}[theorem]{Proposition}
\newtheorem{corollary}[theorem]{Corollary}
\theoremstyle{definition}
\newtheorem{definition}[theorem]{Definition}
\newtheorem{example}[theorem]{Example}
\newtheorem{notation}[theorem]{Notation}
\newtheorem{defconst}[theorem]{Definition and Construction}
\theoremstyle{remark}
\newtheorem{remark}[theorem]{Remark}
\newtheorem*{acknowledgement}{Acknowledgement}
\numberwithin{equation}{section}
\begin{document}
\title{Cocycles on tropical varieties via piecewise polynomials}
\author {Georges Francois}
\address {Georges Francois, Fachbereich Mathematik, Technische Universit\"{a}t
  Kaiserslautern, Postfach 3049, 67653 Kaiserslautern, Germany}
\email {gfrancois@email.lu}
\begin{abstract}
We use piecewise polynomials to define tropical cocycles generalising the well-known notion of tropical Cartier divisors to higher codimensions. Groups of  cocycles are tropical analogues of Chow cohomology groups. We also introduce
an intersection product of cocycles with tropical cycles - the counterpart of the classical cap product - and prove that this gives rise to a Poincar\'e duality in some cases.
\end{abstract}
\thanks{2010 \emph{Mathematics Subject Classification}. Primary 14T05; Secondary 14C17, 14F99.} 
\maketitle

\section{Introduction}

In \cite{katzpayne} the authors 
describe a method to assign a Minkowski weight in a complete fan $\Delta$ to a piecewise polynomial on $\Delta$ and therefore suggest to use piecewise polynomials in tropical geometry. If $\Delta$ is unimodular (i.e.\ corresponds to a smooth toric variety), their assignment is even an isomorphism. 

We show in the third section that the assignment of \cite{katzpayne} - which describes the canonical map from equivariant to ordinary Chow cohomology rings of the corresponding toric variety - agrees with the (inductive) intersection product of rational functions introduced in \cite{AR}. This motivates us to use piecewise polynomials as local ingredients for tropical cocycles. It turns out that each piecewise polynomial on an arbitrary tropical fan is a sum of products of rational functions; this can be used to intersect cocycles with tropical cycles. One should note that, in contrast to the classical cap product, our intersection product is well-defined on the level of cycles - not only on classes modulo rational equivalence. We finish the section by deducing a Poincar\'e duality on the cycle $\R^n$ from the isomorphism between the groups of piecewise polynomials and Minkowski weights on complete unimodular fans.

In the fourth section we focus on matroid varieties and smooth tropical varieties (i.e.\ cycles which locally look like matroid varieties). Thereby we prove that each subcycle of a matroid variety (modulo lineality space) can be cut out by a cocycle (theorem \ref{surjective}) and show a Poincar\'e duality in codimension $1$ and dimension $0$ for smooth varieties (corollary \ref{poincaresmooth}). 

A similar construction to piecewise polynomials on fans has recently been introduced independently in \cite{esterov}: Esterov defines tropical varieties with (degree $k$) polynomial weights  and their (codimension $1$) corner loci which are tropical varieties with (degree $k-1$) polynomial weights.

I would like to thank Andreas Gathmann and Johannes Rau for many helpful discussions.
\begin{acknowledgement}
The author is supported by the Fonds national de la Recherche (FNR), Luxembourg.
\end{acknowledgement}

\section{Tropical cycles and rational functions}
In this section we briefly recall the notions of tropical cycles and rational functions as well as the intersection product of rational functions with tropical cycles introduced in \cite{AR}.

A pure-dimensional rational polyhedral complex in a real vector space $V=\Lambda\otimes_{\Z}\R$ associated to a lattice $\Lambda$ is a rational polyhedral complex all of whose maximal polyhedra have the same dimension. The set of $k$-dimensional polyhedra in the polyhedral complex $\curlyx$ is denoted by $\curlyx^{(k)}$. We say that the pure-dimensional rational polyhedral complex $\curlyx$ in $V$ is weighted if it is equipped with a weight function $\omega_{\curlyx}:\curlyx^{(\dim\curlyx)}\rightarrow\Z$ on its maximal polyhedra. Each polyhedron $\sigma\in\curlyx$ induces a linear subspace $V_{\sigma}$ of $V$ generated by differences of vectors in $\sigma$ as well as a sublattice $\Lambda_{\sigma}:=\Lambda\cap V_{\sigma}$. Recall that the notation $\sigma>\tau$ means that the polyhedron $\tau$ is a proper face of the polyhedron $\sigma$; if $\tau$ is a codimension $1$ face of $\sigma$, then we denote the primitive normal vector of $\sigma$ relative to $\tau$ by $u_{\sigma/\tau}$. A tropical polyhedral complex $\curlyx$ of dimension $d$ in $V$ is a weighted rational polyhedral complex of pure dimension $d$ in $V$ which satisfies the following balancing condition for each $\tau\in\curlyx^{(d-1)}$:
\[\sum_{\sigma\in\curlyx:\sigma>\tau} \omega_{\curlyx}(\sigma)\cdot u_{\sigma/\tau}=0\in V/V_{\tau}.\]  
A (tropical) cycle in $V$ is the equivalence class modulo refinement of a tropical polyhedral complex. If $\curlyx$ is a representative of the tropical cycle $X$, then we call $\curlyx$ a polyhedral structure of $X$; in this situation we also call $X$ the cycle associated to $\curlyx$. The support $|X|$ of a tropical cycle $X$ is the union of polyhedra of non-zero weight in a polyhedral structure of $X$. A cycle $Y$ is a subcycle of $X$ if $|Y|\subseteq |X|$; the (additive) group of $k$-dimensional subcycles of $X$ is denoted by $\Zy_k(X)$. A fan cycle $X$ is a cycle associated to a tropical fan (which is then called fan structure of $X$). The group of $k$-dimensional fan subcycles of a fan cycle $X$ is denoted $\Zaff{k}(X)$. The cycle $X$ is irreducible if $\Zy_{\dim X}(X)=\Z\cdot X$; it is locally irreducible if multiples of the balancing condition are the only linear relations of the primitive normal vectors $u_{\sigma/\tau}$. A morphism $f:X\rightarrow Y$ of tropical cycles is a locally integer affine linear map $|X|\rightarrow |Y|$ on the supports. If the morphism $f$ has an inverse which is a morphism, and $\omega_{\curlyx}(\sigma)=\omega_{\curlyy}(f(\sigma))$ for all $\sigma\in\curlyx$ and suitable polyhedral structures $\curlyx,\curlyy$ of $X,Y$, then $f$ is an isomorphism.  We refer to \cite[section 2, section 5]{AR} for more details about tropical cycles.

A rational function on a cycle $X$ is a piecewise affine linear function $\varphi:|X|\rightarrow\R$; that means there is a polyhedral structure $\curlyx$ of $X$ such that the restriction of $\varphi$ to each polyhedron $\sigma\in\curlyx$ is a sum $\varphi_{|\sigma}=\varphi_{\sigma}+a_{\sigma}$ of an integer linear form $\varphi_{\sigma}\in \Lambda_{\sigma}^{\vee}$ and a real constant $a_{\sigma}$. A rational fan function on a fan cycle $X$ is a rational function which is linear on the cones of some fan structure of $X$. 

Let $\varphi$ be a rational function on the cycle $X$ and let $\curlyx$ be a polyhedral structure of $X$ such that $\varphi$ is affine linear on every polyhedron of $\curlyx$. 
The weighted polyhedral complex $\varphi\cdot \curlyx$ is defined as the polyhedral complex $\curlyx\setminus\curlyx^{(\dim X)}$ together with the weight function (on the codimension $1$ polyhedra in $\curlyx$)
\[\omega_{\varphi\cdot\curlyx}(\tau):= \sum_{\sigma\in\curlyx:\sigma>\tau} \omega_{\curlyx}(\sigma)\cdot \varphi_{\sigma}(v_{\sigma/\tau})-\varphi_{\tau}\left(\sum_{\sigma\in\curlyx:\sigma>\tau}\omega_{\curlyx}(\sigma)\cdot v_{\sigma/\tau}\right),\]
where the $v_{\sigma/\tau}\in V$ are representatives of the primitive normal vectors $u_{\sigma/\tau}\in V/V_{\tau}$.
The associated cycle is denoted by $\varphi\cdot X\in \Zy_{\dim X-1}(X)$. If $\varphi$ is an affine linear function, then $\varphi\cdot X=0$. As restrictions of rational functions are again rational functions one can define the intersection product of $\varphi$ with a subcycle $Y$ of $X$ as $\varphi_{\mid Y}\cdot Y$. Note that intersections of rational fan functions with fan cycles are again fan cycles. More details about the intersection product with rational functions can be found in \cite[section 3]{AR}.

\section{Piecewise polynomials and tropical cocycles}
In order to fix our notations we start this section by recalling the definition of a piecewise polynomial on a (not necessarily tropical) fan $F$.

\begin{definition}
Let $\sigma$ be a (rational) cone in the vector space $V=\Lambda\otimes_{\Z}\R$ corresponding to a lattice $\Lambda$. We define $\PO{k}{\sigma}$ to be the set of functions $g:\sigma\rightarrow\R$ that extend to a homogeneous polynomial of degree $k$ on the subspace $V_{\sigma}$ having integer coefficients (that means $g\in\Sym^k (\Lambda_{\sigma}^{\vee})$ - the degree $k$ part of the symmetric algebra $\Sym (\Lambda_{\sigma}^{\vee})$). 
A piecewise polynomial of degree $k$ on a (rational) fan $F$ in $V$ is a continuous function $f:\betrag{F}\rightarrow \R$ on the support of $F$ such that the restriction $f_{\mid \sigma}\in\PO{k}{\sigma}$ for each cone $\sigma\in F$. The sum of two degree $k$ piecewise polynomials $f,g$ on $F$ is defined pointwise, i.e.\ $(f+g)(x):=f(x)+g(x)$. As the $\PO{k}{\sigma}$ are additive groups, the sum $f+g$ is again a piecewise polynomial on $F$. The (additive) group of piecewise polynomials of degree $k$ on the fan $F$ is denoted by $\PPk{F}$. Since products of homogeneous integer polynomials are again homogeneous integer polynomials, the pointwise multiplication of two piecewise polynomials $f\in\PPk{F},g\in\PPO{l}{F}$ is in $\PPO{k+l}{F}$. We call $\PPa{F}:=\oplus_{k\in\N}\PPk{F}$ the graded ring of piecewise polynomials on $F$. Finally, we define $\MPP{F}:=\langle\{l\cdot f: l \text{ linear}, f\in\PPO{k-1}{F}\}\rangle$ to be the subgroup of $\PPk{F}$ generated by linear functions.
\end{definition}

\begin{remark}
Note that piecewise polynomials of degree $1$ are the same as rational fan functions and that restrictions of piecewise polynomials to subfans are again piecewise polynomials (of the same degree).
\end{remark}

\begin{notation}
We denote by $\Zy_{k}(\curlyx)$ the group of $k$-dimensional Minkowski weights in a tropical fan $\curlyx$ (i.e.\ its elements are $k$-dimensional tropical subfans of $\curlyx$). 
\end{notation}

We are ready to state the above-mentioned result of Katz and Payne which was proved in chapter 1, proposition 1.2, theorem 1.4 of \cite{katzpayne}.
\begin{defthm}
\label{katzpayne}
Let $\Delta$ be a complete unimodular (i.e.\ every cone is generated by a part of a lattice basis) fan in $\R^n$. If $\{v_1,\ldots,v_n\}$ is a basis of $\Z^n$, we can regard the elements $v_1^*,\ldots,v_n^*$ of its dual basis as integer linear functions on $\R^n$, that means $v_i^*\in \PO{1}{\R^n}$.  For two cones $\tau<\sigma\in\Delta^{(n)}$, with $\sigma$ generated by $v_1,\ldots,v_n$, one can thus define $e_{\sigma,\tau}:=\prod_{i: v_i\not\in\tau}\frac{1}{v_i^{\ast}}$ to be the inverse of the product $\prod_{v_i\notin\tau}v_i^*\in\PO{n-\dim(\tau)}{\R^n}$. Let $f\in\PPk{\Delta}$. For a maximal cone $\sigma\in\Delta$, $f_{\sigma}$ denotes the polynomial on $\R^n$ which agrees with $f$ on $\sigma$. Then for any $\tau\in\Delta^{(n-k)}$
\[c_{f\cdot\Delta}(\tau):=\sum_{\sigma\in\Delta^{(n)}: \sigma>\tau}e_{\sigma,\tau}f_{\sigma}\]
 is an integer. Furthermore,  \[f\cdot\Delta:=\left(\bigcup_{i\leq n-k}\Delta^{(i)},c_{f\cdot\Delta} \right)\] is a tropical fan, and the assignment  \[\PPk{\Delta}/\MPP{\Delta}\rightarrow\Zy_{n-k}(\Delta), \ \ f\mapsto f\cdot\Delta\] is an isomorphism of groups. The fan cycle associated to $f\cdot\Delta$ is independent under refinement of $\Delta$ and is denoted by $f\cdot\R^n$.
\end{defthm}

\begin{remark}
\label{toric}
All previous notions have counterparts in toric intersection theory: Groups of piecewise polynomials $\PPk{F}$ on any fan $F$ are canonically isomorphic to equivariant Chow cohomology groups $\AT^k(X(F))$ of the associated toric variety $X(F)$ \cite[theorem 1]{payne}. For any complete $n$-dimensional fan $\Delta$, the groups $\Zy_{n-k}(\Delta)$ of Minkowski weights are canonically isomorphic to (ordinary) Chow cohomology groups $\Att^k(X(\Delta))$ of the complete toric variety $X(\Delta)$ \cite[theorem 2.1]{FS}. Katz and Payne showed in \cite[theorem 1.4]{katzpayne} that, under these identifications, the canonical map $\AT^k(X(\Delta))\rightarrow\Att^k(X(\Delta))$ is given by intersections with piecewise polynomials $f\mapsto f\cdot \Delta$.
\end{remark}

\begin{example}
\label{f*R2}
Let $\{e_1,e_2\}$ be the standard basis of $\R^2$ and let $\Delta$ be the complete fan with maximal cones $\langle -e_1,e_1+e_2\rangle$, $\langle -e_2,e_1+e_2\rangle$ and $\langle -e_1,-e_2\rangle$. We write $x:=e_1^*$, $y:=e_2^*$ and see that the dual bases are given by $\{y,y-x\}$, $\{x,x-y\}$ and $\{-x,-y\}$ respectively. Let $f\in\PPP^{2}(\Delta)$ be the piecewise polynomial shown in the picture.
\begin{center}
\includegraphics[scale=0.16]{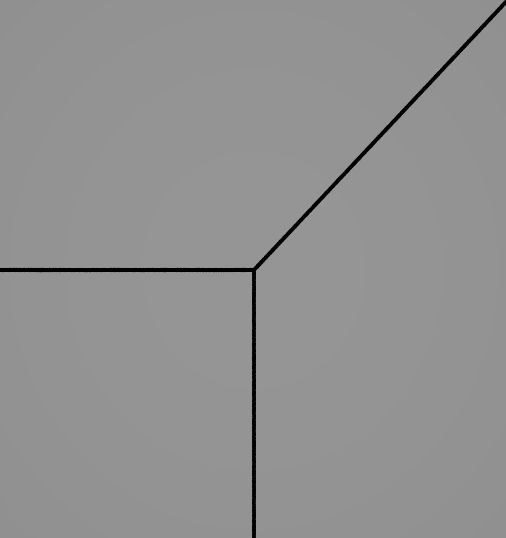}
\put(-30,25){$x^2$}
\put(-60,60){$y^2$}
\put(-60,25){$0$}
\end{center}
Then $f\cdot\Delta$ is the origin with weight
\[c_{f\cdot\Delta}(\{0\})=\frac{y^2}{y(y-x)}+\frac{x^2}{x(x-y)}+\frac{0}{(-x)(-y)}=\frac{-xy^2+yx^2}{xy(x-y)}=1.\]
Note that $f=(\max\{x,y,0\})^2=\max\{x,y\}\cdot\max\{x,y,0\}$ is a product of rational functions and that $\max\{x,y,0\}\cdot\max\{x,y,0\}\cdot\Delta$ and $\max\{x,y\}\cdot\max\{x,y,0\}\cdot\Delta$ (as product of rational functions with cycles defined in the previous section) give the origin with weight $1$ too.
\end{example}

If a piecewise polynomial on a complete fan $\Delta$ is a product of rational (i.e.\ piecewise linear) functions $\varphi_i$, then there are two ways of defining its intersection product with $\Delta$: We can either intersect inductively with the rational functions $\varphi_i$ (cf.\ section 2) or use the formula of theorem \ref{katzpayne}. In the previous example both ways led to the same result. 
We show in the following proposition that this is true in general:

\begin{proposition}
\label{equality}
Let $\Delta$ be a complete unimodular fan in $\R^n$, and let $\varphi_1,\ldots,\varphi_k$ be rational functions on $\R^n$ which are linear on every cone of $\Delta$. Let $f=\varphi_1\cdots \varphi_k\in\PPk{\Delta}$. Then $f\cdot\R^n=\varphi_1\cdots \varphi_k \cdot\R^n$, where the products on the right hand side are products of rational functions with cycles (cf.\ section 2).
\end{proposition}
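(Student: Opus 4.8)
The plan is to prove the statement by induction on $k$, reducing it to the following lemma: \emph{if $\varphi$ is a rational function on $\R^n$ linear on every cone of a complete unimodular fan $\Delta$ and $g\in\PPO{l}{\Delta}$, then $(\varphi g)\cdot\Delta=\varphi\cdot(g\cdot\Delta)$}, where $\varphi g\in\PPO{l+1}{\Delta}$ is the product of piecewise polynomials, $g\cdot\Delta$ is the tropical fan of Theorem \ref{katzpayne}, and $\varphi\cdot(g\cdot\Delta)$ is the intersection with the rational function $\varphi$ (which is a rational fan function on $g\cdot\Delta$ since the cones of $g\cdot\Delta$ are cones of $\Delta$). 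Granting the lemma, the proposition follows: for $k=1$ take $g=1\in\PPO{0}{\Delta}$, for which $g\cdot\Delta=\R^n$ is immediate from the formula of Theorem \ref{katzpayne} (only $\sigma$ itself contributes to $c_{1\cdot\Delta}(\sigma)$, with $e_{\sigma,\sigma}=1$); for $k>1$ put $g:=\varphi_2\cdots\varphi_k\in\PPO{k-1}{\Delta}$, apply the proposition inductively to $\varphi_2,\dots,\varphi_k$ to get $g\cdot\R^n=\varphi_2\cdots\varphi_k\cdot\R^n$, and conclude $\varphi_1\cdots\varphi_k\cdot\R^n=\varphi_1\cdot(g\cdot\R^n)=(\varphi_1 g)\cdot\R^n=f\cdot\R^n$ by the lemma, all intersections being well-defined on the level of cycles.

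To prove the lemma I would fix a codimension-one cone $\tau\in\Delta^{(n-l-1)}$ of both fans (which have the common underlying complex $\bigcup_{i\le n-l-1}\Delta^{(i)}$) and compare the two weights at $\tau$. Fix a maximal $\sigma\in\Delta^{(n)}$ with $\sigma>\tau$, generated by a lattice basis $v_1,\dots,v_n$ with $\tau=\langle v_a:a\in A\rangle$ and $B_\sigma:=\{1,\dots,n\}\setminus A$, $|B_\sigma|=l+1$. The $(n-l)$-dimensional cones strictly between $\tau$ and $\sigma$ are exactly $\rho_b:=\langle v_a:a\in A\rangle+\langle v_b\rangle$ for $b\in B_\sigma$, and directly from the definitions $e_{\sigma,\rho_b}=v_b^{\ast}\, e_{\sigma,\tau}$ (with $v_b^{\ast}$ the $\sigma$-dual basis vector), $v_{\rho_b/\tau}=v_b$ is a representative of the primitive normal vector, and $\varphi_{\rho_b}(v_{\rho_b/\tau})=\varphi_\sigma(v_b)$ as $\varphi$ is linear on $\sigma$. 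Substituting $c_{g\cdot\Delta}(\rho_b)=\sum_{\sigma'>\rho_b}e_{\sigma',\rho_b}g_{\sigma'}$ into the weight formula for $\varphi\cdot(g\cdot\Delta)$ and interchanging sums, this weight becomes
\[\sum_{\sigma\in\Delta^{(n)}:\,\sigma>\tau} g_\sigma\, e_{\sigma,\tau}\sum_{b\in B_\sigma} v_b^{\ast}\,\varphi_\sigma(v_b)\ -\ \varphi_\tau\bigl(\textstyle\sum_{\sigma\in\Delta^{(n)}:\,\sigma>\tau} g_\sigma\, e_{\sigma,\tau}\sum_{b\in B_\sigma} v_b^{\ast}\, v_b\bigr),\]
which must be identified with $c_{(\varphi g)\cdot\Delta}(\tau)=\sum_{\sigma>\tau}g_\sigma\,e_{\sigma,\tau}\,\varphi_\sigma$.

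The step I expect to be the main obstacle is the vanishing identity $S_\tau:=\sum_{\sigma\in\Delta^{(n)}:\,\sigma>\tau}e_{\sigma,\tau}\,g_\sigma=0$, an equality of rational functions which is not visible from the formula but follows from Theorem \ref{katzpayne}: since $g\cdot\Delta$ is a \emph{tropical} fan it is balanced at $\tau$, i.e.\ $\sum_\rho c_{g\cdot\Delta}(\rho)\,v_{\rho/\tau}\in V_\tau$; expanding this sum over maximal cones as above and pairing with any linear form $\mu$ vanishing on $V_\tau$ yields $\mu\cdot S_\tau=0$ (using $\sum_{b\in B_\sigma}v_b^{\ast}\,\mu(v_b)=\mu$ for such $\mu$, as $\mu(v_a)=0$ for $a\in A$), hence $S_\tau=0$. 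Granting this, the identification is a short computation: using $\varphi_\sigma|_{V_\tau}=\varphi_\tau|_{V_\tau}$ one writes $\sum_{b\in B_\sigma}v_b^{\ast}\,\varphi_\sigma(v_b)=\varphi_\sigma-\varphi_\tau\circ\pi_\sigma$, where $\pi_\sigma\colon\R^n\to V_\tau$ is the projection along $\langle v_b:b\in B_\sigma\rangle$; evaluating the $\varphi_\tau$-term via one fixed linear extension of $\varphi_\tau$ to $\R^n$ (the outcome is extension-independent exactly because $S_\tau=0$) produces $-\sum_\sigma g_\sigma\,e_{\sigma,\tau}\,\varphi_\tau\circ\pi_\sigma$, so that the two $\varphi_\tau\circ\pi_\sigma$ contributions cancel and one is left with $\sum_{\sigma>\tau}g_\sigma\,e_{\sigma,\tau}\,\varphi_\sigma=c_{(\varphi g)\cdot\Delta}(\tau)$. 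Since the weights agree on every maximal cone, the two fans coincide and the lemma is proved. The delicacy throughout is purely bookkeeping --- $\varphi_\tau$ is a priori only a form on $V_\tau$, and the individual $\sigma$-summands are genuine rational functions although their sums are integers --- and $S_\tau=0$ is precisely the input reconciling the two descriptions. (One could alternatively try to deduce the proposition from the toric dictionary of Remark \ref{toric}, but that would require an extra compatibility between the products of \cite{AR} and \cite{FS}, so the direct argument seems preferable.)
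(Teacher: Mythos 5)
Your proof is correct, and it follows the same broad strategy as the paper: induct on $k$ by reducing to a commutation lemma $(\varphi g)\cdot\Delta = \varphi\cdot(g\cdot\Delta)$, which is verified by comparing the two weight formulas at a fixed cone $\tau$ and reorganising the double sum over pairs $\tau<\rho_b<\sigma$ using $e_{\sigma,\rho_b}=v_b^*\,e_{\sigma,\tau}$. The one place where you diverge is the treatment of the correction term $-\varphi_\tau(\sum_\rho c(\rho)v_{\rho/\tau})$ in the Allermann--Rau weight formula. You keep it and reconcile it with the Katz--Payne formula by deriving the vanishing identity $S_\tau=\sum_{\sigma>\tau}e_{\sigma,\tau}g_\sigma=0$ from the balancing of $g\cdot\Delta$; this is exactly the obstacle you flag, and your derivation of it (pair the balancing relation with any $\mu$ killing $V_\tau$, observe $\sum_{b\in B_\sigma}v_b^*\mu(v_b)=\mu$, and divide by $\mu$ in the fraction field) is sound. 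The paper instead sidesteps the correction term entirely: it subtracts a global linear function from $\varphi_1$ so that $\varphi_1|_\tau\equiv 0$, observing that this changes neither side (since $l\cdot\varphi_2\cdots\varphi_k\in\MPP{\Delta}$ on the Katz--Payne side and $l\cdot Z=0$ on the Allermann--Rau side). With that normalisation the $\varphi_\tau$-term vanishes and no identity like $S_\tau=0$ is needed; the comparison reduces to the cleaner identity $\sum_{i}\varphi_1(v_i)v_i^*=(\varphi_1)_\sigma$. So both arguments are valid; the paper's normalisation trick is economical, whereas your version makes visible an extra structural fact (the degree $-1$ rational function $S_\tau$ vanishes, a reflection of the balancing of $g\cdot\Delta$ one codimension up) at the cost of the bookkeeping you anticipated.
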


In order to prove this proposition we need the following lemma.
\begin{lemma}
\label{phi}
Let $\varphi$ be a rational fan function on a fan cycle $X$ which is linear on the cones of the unimodular fan structure $\curlyx$ of $X$. For a ray $r$ of $\curlyx$ with primitive integral vector $v_r$, let $\Psi_r$ be the function which is linear on the cones of $\curlyx$, satisfies $\Psi_r(v_r)=1$, and maps the primitive integral vectors of all other rays of $\curlyx$ to $0$. Then $\varphi=\sum_{r\in\curlyx^{(1)}}\varphi(v_r)\cdot\Psi_r$.
\end{lemma}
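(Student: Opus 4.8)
The plan is to prove the identity pointwise on the support $|X|$. Both sides are functions that are linear on each cone of $\curlyx$, so it suffices to check that they agree on the primitive integral vector $v_s$ of every ray $s\in\curlyx^{(1)}$; because $\curlyx$ is unimodular, every cone is spanned by such rays as part of a lattice basis, and a piecewise-linear (on $\curlyx$) function is determined by its values on the rays.

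So first I would fix a ray $s\in\curlyx^{(1)}$ and evaluate the right-hand side at $v_s$. By definition of $\Psi_r$ we have $\Psi_r(v_s)=\delta_{rs}$ (Kronecker delta): it is $1$ when $r=s$ and $0$ for all other rays. Hence $\sum_{r\in\curlyx^{(1)}}\varphi(v_r)\cdot\Psi_r(v_s)=\varphi(v_s)$, which is exactly the left-hand side evaluated at $v_s$. Since this holds for every ray, the two functions — both linear on the cones of the unimodular fan $\curlyx$ — coincide on a lattice basis of each cone and therefore agree everywhere on $|X|$.

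The one point that needs a word of justification is that $\Psi_r$ is well-defined, i.e.\ that there really is a function, linear on the cones of $\curlyx$, taking the prescribed values on the primitive ray generators. This is precisely where unimodularity is used: on a maximal cone $\sigma$ spanned by $v_{r_1},\dots,v_{r_m}$ (part of a lattice basis), the values on these generators extend uniquely to a linear form in $\Lambda_\sigma^\vee$, and unimodularity guarantees these local definitions are integral and glue consistently along shared faces (two adjacent maximal cones agree on their common face since the prescribed ray values determine the form on the subspace spanned by the shared rays). Thus $\Psi_r\in\PO{1}{\curlyx}$ is a genuine rational fan function on $X$.

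I do not expect any real obstacle here; the statement is essentially a linear-algebra expansion in the "dual basis of ray generators," and the only subtlety — the existence and integrality of the $\Psi_r$ — is handled entirely by the unimodularity hypothesis. The argument is short and I would present it in the two steps above: (1) observe $\Psi_r$ is well-defined by unimodularity, and (2) evaluate both sides on each primitive ray vector and invoke linearity on cones to conclude equality of the functions.
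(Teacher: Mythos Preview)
Your proof is correct and follows essentially the same approach as the paper: both sides are linear on the cones of $\curlyx$, so it suffices to compare their values on the primitive integral vectors of the rays, where they agree by construction. Your additional remarks on the well-definedness and integrality of $\Psi_r$ via unimodularity are fine but go beyond what the paper bothers to spell out.
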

\begin{proof}
As $\varphi$ and $\sum_{r\in\curlyx^{(1)}}\varphi(v_r)\cdot\Psi_r$ are both linear on the cones of $\curlyx$ it suffices to compare their values on the primitive integral vectors of the rays of $\curlyx$ where they agree by construction. 
\end{proof}

\begin{proof}[Proof of proposition \ref{equality}]
Let $\tau\in\Delta^{(n-k)}$ be an arbitrary codimension $k$ cone in $\Delta$. By adding an appropriate linear function $l$, we can assume that the restriction ${\varphi_1}_{\mid\tau}$ is identically zero. This does not change $f\cdot\R^n$ since $l\cdot \varphi_2\cdots \varphi_k$ is in $\MPP{\Delta}$. In the following $r,r_i$ denote rays of $\Delta$ with respective primitive integral vector $v,v_i$. If $\alpha<\sigma$ are cones in the unimodular fan $\Delta$, then $\sigma$ is the Minkowski sum of $\alpha$ and the $\dim(\sigma)-\dim(\alpha)$ rays of $\sigma$ which are not in $\alpha$. We first assume that $k=1$. The fact that ${\varphi_1}_{\mid\tau}=0$ implies that 
\[\omega_{\varphi_1\cdot\Delta}(\tau)=\sum_{r:\tau+r\in\Delta^{(n)}}\varphi_1(v)=\sum_{r:\tau+r\in\Delta^{(n)}}\frac{1}{v^*}(\varphi_1)_{\tau+r}=c_{\varphi_1\cdot\Delta}(\tau),\]
where the sums run over Minkowski sums $\tau+r$. Here the middle equality follows from lemma \ref{phi} and the facts that ${\varphi_1}_{\mid \tau}=0$ and $(\Psi_r)_{\tau+r}=v^*$.\\
Now we assume that $k>1$ and set $g:=\varphi_2\cdots \varphi_k$. As ${\varphi_1}_{\mid\tau}=0$ the definition of intersecting with a rational function implies that $\tau$ has weight   
\[\omega_{\varphi_1\cdots \varphi_k\cdot\Delta}(\tau)=\sum_{r: \tau+r\in\Delta^{(n-k+1)}}\omega_{\varphi_2\cdots \varphi_k\cdot\Delta}(\tau+r)\cdot \varphi_1(v)\]
in $\varphi_1\cdots \varphi_k\cdot\Delta$.
By induction on the degree of $f$ this is equal to
\begin{eqnarray*}
&&\sum_{r: \tau+r\in\Delta^{(n-k+1)}} c_{(\varphi_2\cdots \varphi_k)\cdot\Delta}(\tau+r)\cdot \varphi_1(v) \\
&=&  \sum_{r: \tau+r\in\Delta^{(n-k+1)}}\sum_{\sigma>\tau+r} e_{\sigma,\tau+r}\cdot g_{\sigma}\cdot \varphi_1(v)\\
&=& \sum_{\stackrel{\sigma>\tau \text{ in } \Delta^{(n)}}{\sigma=\tau+r_1+\ldots+r_{k}}}\sum_{i=1}^{k} \varphi_1(v_i)\cdot e_{\sigma,\tau+r_i}\cdot g_{\sigma}\\
&=& \sum_{\stackrel{\sigma>\tau \text{ in } \Delta^{(n)}}{\sigma=\tau+r_1+\ldots+r_{k}}}\sum_{i=1}^{k} \varphi_1(v_i)\cdot v_i^{\ast}\cdot e_{\sigma,\tau}\cdot g_{\sigma}\\
&=& \sum_{\stackrel{\sigma>\tau \text{ in } \Delta^{(n)}}{\sigma=\tau+r_1+\ldots+r_{k}}} e_{\sigma,\tau} \cdot {\left(g\cdot\sum_{i=1}^{k}\varphi_1(v_i)v_i^{\ast}\right)}_{\sigma}.
\end{eqnarray*}
As in the induction start we use ${\varphi_1}_{\mid\tau}=0$ to conclude that the above agrees with
 \[\sum_{\stackrel{\sigma>\tau \text{ in } \Delta^{(n)}}{\sigma=\tau+r_1+\ldots+r_{k}}} e_{\sigma,\tau} \cdot {\left(g\cdot \varphi_1\right)}_{\sigma}=c_{f\cdot\Delta}(\tau).\]
\end{proof}

\begin{remark}
There is an alternative way of deducing the $k>1$ case of proposition \ref{equality} from the $k=1$ case: 
As the canonical map $\AT^*(X(\Delta))\rightarrow\Att^*(X(\Delta))$ is a ring homomorphism it follows from remark \ref{toric} that \[(\varphi_1\cdots\varphi_k)\cdot\Delta=(\varphi_1\cdot\Delta)\cup\ldots\cup(\varphi_k\cdot\Delta),\]
where the cup products on the right hand side are computed using the fan displacement rule of \cite[proposition 3.1, theorem 3.2]{FS}. By \cite[theorem 4.4]{katz} or \cite[theorem 1.9]{hannes} the cup product of Minkowski weights agrees with the intersection product of tropical cycles (cf.\ \cite[definition 9.3]{AR}) which implies that the above is equal to $\varphi_1\cdots\varphi_k\cdot\Delta$ (interpreted as inductive intersection product with rational functions). 
 
\end{remark}

So far $\R^n$ is the only fan cycle admitting an intersection product with piecewise polynomials (cf.\ theorem \ref{katzpayne}). Therefore, our next aim is to define an intersection product for arbitrary fan cycles. The idea is to write piecewise polynomials as sums of products of rational fan functions and use these representations to define an intersection product.
We introduce some more notation:

\begin{notation}
The group of piecewise polynomials of degree $k$ on a fan cycle $X$ is defined to be $\PPaffk{X}:=\{f: f\in\PPk{\curlyx} \text{ for some fan structure $\curlyx$ of $X$}\}$. We set $\MPPaff{X}:=\langle\{l\cdot f: l\text{ linear }, f\in\PPOaff{k-1}{X}\}\rangle$.
\end{notation}

\begin{notation}
Let $F$ be a unimodular fan and let $v_1,\ldots,v_m$ be the primitive integral vectors of the rays $r_1,\ldots,r_m$ of $F$. Then $\Psi_{r_i}:=\Psi_i\in\PPO{1}{F}$ is the unique function which is linear on the cones of $F$ and satisfies $\Psi_i(v_j)=\delta_{ij}$, where $\delta_{ij}$ is the Kronecker delta function. For a cone $\tau\in F$ we have a piecewise polynomial $\Psi_{\tau}:=\prod_{i:v_i\in\tau}\Psi_i\in\PPO{\dim\tau}{F}$. Note that $\Psi_{\tau}$ vanishes away from $\bigcup_{\sigma>\tau}\sigma$.
\end{notation}

As mentioned in \cite{brion}, one can show that the functions $\Psi_{\tau}$ generate the ring of piecewise polynomials:

\begin{proposition}
\label{zerlegung}
Let $f\in\PPk{F}$ be a piecewise polynomial of degree $k$ on a unimodular fan $F$. Then there exists a representation $f=\sum_{\sigma\in F^{(\leq k)}}a_{\sigma}\Psi_{\sigma}$, where the $a_{\sigma}$ are homogeneous integer polynomials of degree $k-\dim(\sigma)$ and the sum runs over all cones of $F$ of dimension at most $k$. In particular, piecewise polynomials on tropical  fan cycles are sums of products of rational functions.
\end{proposition}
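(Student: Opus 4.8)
The plan is to induct on the dimension $d$ of the fan $F$. For $d=0$ there is nothing to show: $F=\{0\}$, and $f$ equals the constant $f(0)\cdot\Psi_{\{0\}}$ (with $\Psi_{\{0\}}=1$) if $k=0$, while $f=0$ if $k\ge 1$.

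For the inductive step I would first pass to the $(d-1)$-skeleton $F'$ of $F$, i.e.\ the (again unimodular) subfan of all cones of $F$ of dimension $\le d-1$. When $d\ge 2$ the fan $F'$ has the same rays as $F$, so comparing values on primitive ray generators shows that the functions $\Psi_i$ of $F$ restrict to those of $F'$, and hence so do the $\Psi_\sigma$ (the case $d=1$, where $F'=\{0\}$, being trivial here). Applying the induction hypothesis to $f|_{F'}\in\PPk{F'}$ and transferring the relation back to $F$ yields homogeneous integer polynomials $b_\sigma$ of degree $k-\dim\sigma$, indexed by cones $\sigma$ with $\dim\sigma\le\min\{d-1,k\}$, such that $h:=f-\sum_\sigma b_\sigma\Psi_\sigma$ vanishes on every cone of $F$ of dimension $\le d-1$.

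It then remains to recover $h$ from the maximal cones. Fix a maximal cone $\sigma=\langle v_{i_1},\dots,v_{i_d}\rangle$; by unimodularity the $v_{i_j}$ form a lattice basis of $\Lambda_\sigma$ with dual basis the restrictions $\Psi_{i_1}|_\sigma,\dots,\Psi_{i_d}|_\sigma$, so $h|_\sigma$ is an integer homogeneous polynomial of degree $k$ in these coordinate functions. Each facet of $\sigma$ lies in one of the coordinate hyperplanes $\{\Psi_{i_j}=0\}$ and, being $(d-1)$-dimensional, is Zariski dense in it; since $h$ vanishes there, $h|_\sigma$ is divisible by $\Psi_{i_j}$, and as the $\Psi_{i_j}$ are distinct variables, by their product $\Psi_\sigma|_\sigma$. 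Hence $h|_\sigma=a_\sigma^{(0)}\cdot\Psi_\sigma|_\sigma$ for a homogeneous integer polynomial $a_\sigma^{(0)}$ of degree $k-d$ on $V_\sigma$ --- which is $0$ as soon as $d>k$ --- and, $\Lambda_\sigma$ being saturated in $\Lambda$ so that $\Lambda^\vee\twoheadrightarrow\Lambda_\sigma^\vee$, I can lift $a_\sigma^{(0)}$ to some $a_\sigma\in\Sym^{k-d}(\Lambda^\vee)$. One then checks directly that $h=\sum_{\sigma\in F^{(d)}}a_\sigma\Psi_\sigma$: restricted to a maximal cone $\sigma$, every other summand $\Psi_{\sigma'}$ carries a factor $\Psi_i$ with $r_i$ a ray of $\sigma'$ but not of $\sigma$ and so vanishes on $\sigma$, leaving only the $\sigma$-term, which equals $h|_\sigma$; on a cone of dimension $<d$ all summands, and $h$ itself, vanish. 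Combining the two steps expresses $f$ as a sum $\sum_{\sigma\in F^{(\leq k)}}c_\sigma\Psi_\sigma$ of the asserted form, only cones of dimension $\le k$ occurring since the top-dimensional terms vanish when $d>k$. The last assertion of the proposition is then immediate: any tropical fan cycle admits a unimodular fan structure, onto which a degree $k$ piecewise polynomial restricts, and in the representation $\sum a_\sigma\Psi_\sigma$ each $\Psi_\sigma=\prod_{i:v_i\in\sigma}\Psi_i$ is a product of rational fan functions while each $a_\sigma$ is a $\Z$-linear combination of monomials in a basis of $\Lambda^\vee$, hence of products of linear functions; so $f$ is a sum of products of rational functions.

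I expect the divisibility step to be the main obstacle: one must argue that a piecewise polynomial vanishing on the $(d-1)$-skeleton is, \emph{on each maximal cone}, genuinely divisible by the product of the corresponding coordinate functions. This is precisely where unimodularity is indispensable --- it turns the $\Psi_{i_j}|_\sigma$ into honest coordinates on $V_\sigma$ and the primitive generators of $\sigma$ into a lattice basis of $\Lambda_\sigma$ --- combined with the elementary observation that a facet of $\sigma$, being full-dimensional in its linear span, forces the restricted polynomial to vanish on the entire hyperplane, not merely on the facet.
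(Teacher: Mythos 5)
Your proof is correct, and it follows the same overall plan as the paper's: induct on $\dim F$, subtract off the contribution of the $(d-1)$-skeleton coming from the induction hypothesis, and then show that the resulting piecewise polynomial $h$, vanishing on the skeleton, is $\sum_{\sigma\in F^{(d)}}a_\sigma\Psi_\sigma$. Where you diverge is in establishing that last step. The paper invokes Brion's exact sequence
\[
0\rightarrow \Psi_{\sigma}\PO{k-\dim F}{F}\hookrightarrow\PPk{F}\xrightarrow{\text{rest.}}\PPk{F\setminus\{\sigma\}}\rightarrow 0
\]
together with a secondary induction on the number of maximal cones, peeling them off one at a time. You instead argue directly and elementarily on each maximal cone $\sigma$: unimodularity makes the restricted $\Psi_{i_j}$ honest coordinates on $V_\sigma$, vanishing of $h$ on each facet forces vanishing on the corresponding coordinate hyperplane, the $\Psi_{i_j}$ are coprime so $\Psi_\sigma|_\sigma$ divides $h|_\sigma$, and then the decomposition $h=\sum_\sigma a_\sigma\Psi_\sigma$ is verified cone by cone, noting that $\Psi_{\sigma'}$ vanishes on any cone not containing $\sigma'$. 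This is really an unpacking of Brion's sequence rather than a different proof, but it has the advantage of being self-contained and of handling all maximal cones at once instead of via a second induction. One minor point worth spelling out: the lift of $a_\sigma^{(0)}$ from $\Sym^{k-d}(\Lambda_\sigma^\vee)$ to $\Sym^{k-d}(\Lambda^\vee)$ exists because $\Lambda_\sigma=\Lambda\cap V_\sigma$ is saturated, as you note; the paper sidesteps this by working with $\PO{k-\dim F}{F}$, i.e.\ polynomials on the support of $F$. Both versions are fine.
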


\begin{proof}
We use induction on the dimension of $F$, the case $\dim F=0$ being obvious. We know by induction hypothesis that there are homogeneous integer polynomials $a_{\sigma}$ such that $f_{\mid\betrag{F_1}}=\sum_{\sigma\in F_1^{(\leq k)}}a_{\sigma}\Psi_{\sigma}$, where $ F_1:=\{\sigma:\sigma\in F^{(p)} \text{ with } p<\dim F\}$. Thus, it suffices to show the claim for $g:=f-\sum_{\sigma\in F_1^{(\leq k)}}a_{\sigma}\Psi_{\sigma}\in\PPk{F}$. Now we use induction on the number $r$ of maximal cones in $F$. Let $r=1$ and $\sigma$ be the unique maximal cone in $F$. By \cite[section 1.2]{brion}, we know that the following sequence is exact: \[0\rightarrow \Psi_{\sigma}\PO{k-\dim F}{F}\hookrightarrow\PPk{F}\stackrel{\text{rest.}}{\rightarrow}\PPk{F\setminus\{\sigma\}}\rightarrow 0.\]
Here $\PO{k-\dim F}{F}$ denotes the group of homogeneous integer polynomials of degree $k-\dim F$ on $F$. Since $g_{\mid\betrag{ F\setminus\{\sigma\}}}=g_{\mid\betrag{ F_1}}=0$, it follows that there is a polynomial $a_{\sigma}$ such that $g=a_{\sigma}\Psi_{\sigma}$. Now let $r>1$ and $\sigma\in F$ a maximal cone. By the induction hypothesis, there are polynomials $b_{\tau}$ such that $g_{\mid \betrag{F\setminus\{\sigma\}}}=\sum_{\tau\in F\setminus\{\sigma\}^{(\leq k)}}b_{\tau}\Psi_\tau$. Since the restriction of $g-\sum_{\tau\in F\setminus\{\sigma\}^{(\leq k)}}b_{\tau}\Psi_\tau$ to $F\setminus\{\sigma\}$ is $0$, the claim follows from the exactness of the above sequence.
It remains to prove the ``in particular'' statement. Let $f'\in\PPk{X}$ be a piecewise polynomial on a fan cycle $X$. We choose a a fan structure $\curlyx$ of $X$ such that $f'\in\PPk{\curlyx}$ and refine it to a unimodular fan structure $\curlyx'$ (cf.\ \cite[section 2.6]{fulton}). Now we apply the first part of the proposition to $f'\in\PPk{\curlyx'}$; as the $\Psi_{\sigma}$ are products of rational functions and the homogeneous integer polynomials $a_{\sigma}$ are sums of products of linear functions, it follows that $f'$ is a sum of products of rational functions.  
\end{proof}

The previous proposition together with the well-known intersections with rational functions enable us to define an intersection product of piecewise polynomials with tropical fan cycles. Later we will use this to construct an intersection product of cocycles with arbitrary cycles.

\begin{definition}
\label{localint}
Let $f\in\PPk{X}$ be a piecewise polynomial on a fan cycle $X\in\Zaffd(V)$. By proposition \ref{zerlegung} we can choose rational functions $\varphi_j^{i}$ such that $f=\sum_{i=1}^{s}\varphi_1^{i}\cdots\varphi_k^{i}\in\PPk{X}$. This allows us to define the intersection of $f$ with the cycle $X$ to be
\[f\cdot X:=\sum_{i=1}^{s}(\varphi_1^{i}\cdots\varphi_k^{i}\cdot X)\in\Zaff{d-k}(X).\]
In fact, we can define the intersection of $f$ with any fan subcycle of $X$ in this way. 
\end{definition}

We have seen in example \ref{f*R2} that representations of piecewise polynomials as sums of products of rational functions are not unique. Therefore, we need to ensure that the intersection product does not depend on the chosen representation: 

\begin{proposition}
\label{prod}
Let $\varphi_1^{i},\ldots,\varphi_k^{i}$, $\gamma_1^{j},\ldots,\gamma_k^{j}$, with $k\leq d$ be rational fan functions on a fan cycle $X\in\Zaffd(V)$ such that $f:=\sum_{i\in I} \varphi_1^{i}\cdots\varphi_k^{i}=\sum_{j\in J} \gamma_1^{j}\cdots\gamma_k^{j}\in\PPk{X}$. Then we have the following equation of intersection products of rational functions with cycles: \[\sum_{i\in I} \varphi_1^{i}\cdots\varphi_k^{i}\cdot X=\sum_{j\in J} \gamma_1^{j}\cdots\gamma_k^{j}\cdot X.\]
\end{proposition}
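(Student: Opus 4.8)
The plan is to reduce the statement to the case $X=\R^n$, where the intersection product is completely controlled by Theorem~\ref{katzpayne} and Proposition~\ref{equality}. First I would choose a unimodular fan structure $\curlyx$ of $X$ on which all the rational functions $\varphi_1^i,\ldots,\varphi_k^i,\gamma_1^j,\ldots,\gamma_k^j$ are linear, and then enlarge $\curlyx$ to a complete unimodular fan $\hat{\curlyx}$ of $\R^n$ containing $\curlyx$ as a subfan (one first completes $\curlyx$ to a complete fan and then resolves the cones outside $\curlyx$; cf.\ \cite[section 2.6]{fulton}). Extending the weight function by zero exhibits $X$ as a Minkowski weight on $\hat{\curlyx}$, so the isomorphism of Theorem~\ref{katzpayne} yields a piecewise polynomial $g$ on $\hat{\curlyx}$, of degree $n-\dim X$, with $X=g\cdot\R^n$.

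The heart of the argument is then the identity
\[
\varphi\cdot(g\cdot\R^n)=(\varphi\,g)\cdot\R^n\qquad\text{for every rational fan function }\varphi\text{ on }\hat{\curlyx},
\]
where the left-hand side is the intersection of $\varphi$ with the tropical cycle $g\cdot\R^n$ (section~2) and the right-hand side is the intersection of the piecewise polynomial $\varphi\,g$ with $\R^n$ (Theorem~\ref{katzpayne}). To prove it I would write $g=\sum_m\eta_1^m\cdots\eta_{n-\dim X}^m$ as a sum of products of rational fan functions (Proposition~\ref{zerlegung}); by Proposition~\ref{equality} together with the $\Z$-linearity of $h\mapsto h\cdot\R^n$ one gets $g\cdot\R^n=\sum_m\eta_1^m\cdots\eta_{n-\dim X}^m\cdot\R^n$, and then, using additivity of the intersection with a rational function in its cycle argument and Proposition~\ref{equality} once more (now for the longer products $\varphi\cdot\eta_1^m\cdots\eta_{n-\dim X}^m$), $\varphi\cdot(g\cdot\R^n)=\sum_m(\varphi\,\eta_1^m\cdots\eta_{n-\dim X}^m)\cdot\R^n=(\varphi\,g)\cdot\R^n$. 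Iterating this identity gives $\varphi_1\cdots\varphi_k\cdot(g\cdot\R^n)=(\varphi_1\cdots\varphi_k\,g)\cdot\R^n$ for all rational fan functions $\varphi_1,\ldots,\varphi_k$ on $\hat{\curlyx}$.

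Granting this, the proposition follows immediately. For any representation $f=\sum_{i\in I}\varphi_1^i\cdots\varphi_k^i$ as in Definition~\ref{localint} (after refining, all $\varphi_j^i$ may be assumed linear on $\hat{\curlyx}$) one computes, using again the $\Z$-linearity of $h\mapsto h\cdot\R^n$,
\[
\sum_{i\in I}\varphi_1^i\cdots\varphi_k^i\cdot X=\sum_{i\in I}\varphi_1^i\cdots\varphi_k^i\cdot(g\cdot\R^n)=\sum_{i\in I}(\varphi_1^i\cdots\varphi_k^i\,g)\cdot\R^n=(f\,g)\cdot\R^n,
\]
and the last expression no longer refers to the representation; it is moreover independent of the choice of $g$, since replacing $g$ by another representative of the Minkowski weight $X$ changes $f\,g$ by a sum of products of linear functions with piecewise polynomials, i.e.\ by an element of the kernel of $h\mapsto h\cdot\R^n$. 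Applying this to the two representations in the statement gives the asserted equality.

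The step demanding the most care is the compatibility $\varphi\cdot(g\cdot\R^n)=(\varphi\,g)\cdot\R^n$ of the two a priori different intersection products, and, to a lesser extent, the existence of a complete unimodular fan containing $\curlyx$. As a more self-contained alternative, avoiding the completion altogether, one can expand each $\varphi_j^i$ by Lemma~\ref{phi} as $\sum_{r}\varphi_j^i(v_r)\Psi_r$, so that $\sum_{i}\varphi_1^i\cdots\varphi_k^i\cdot X$ becomes a $\Z$-combination of the cycles $\Psi_{r_1}\cdots\Psi_{r_k}\cdot X$ with coefficients $\sum_i\prod_j\varphi_j^i(v_{r_j})$; restricting the relation $f=0$ to each maximal cone of $\curlyx$, where the $\Psi_r$ restrict to algebraically independent linear forms, shows that these coefficients vanish on all tuples $(r_1,\ldots,r_k)$ whose members lie in a common cone, i.e.\ on all tuples with $\Psi_{r_1}\cdots\Psi_{r_k}\neq 0$, and one checks separately (by a support/locality argument on the iterated intersection) that $\Psi_{r_1}\cdots\Psi_{r_k}\cdot X=0$ for the remaining tuples, where $\Psi_{r_1}\cdots\Psi_{r_k}$ is the zero function.
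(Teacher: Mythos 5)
Your primary route has a gap at the very last step. You write $(f\,g)\cdot\R^n$ and assert that this ``no longer refers to the representation,'' but $f$ is only a function on $|X|$, not on $\R^n$; to form the piecewise polynomial $f\,g$ on $\hat{\curlyx}$ you must first extend $f$ to $\R^n$, and that extension is made by choosing extensions $\tilde\varphi_j^i$ of the individual factors $\varphi_j^i$. For the two representations in the statement one obtains two extensions $\tilde f_1=\sum_{i}\tilde\varphi_1^i\cdots\tilde\varphi_k^i$ and $\tilde f_2=\sum_{j}\tilde\gamma_1^j\cdots\tilde\gamma_k^j$ which agree on $|X|$ but in general differ on $\R^n\setminus|X|$, so what you actually need is $(\tilde f_1\,g)\cdot\R^n=(\tilde f_2\,g)\cdot\R^n$ --- and this is never addressed. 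It is true, but proving it requires work: using the exact sequence from the proof of Proposition~\ref{zerlegung} one can write $\tilde f_1-\tilde f_2$ (which vanishes on $|\curlyx|$) as $\sum_{\sigma\notin\curlyx}a_\sigma\Psi_\sigma$, push your compatibility identity through to reduce to $a_\sigma\Psi_\sigma\cdot X$, and then invoke the vanishing $\Psi_{r_1}\cdots\Psi_{r_k}\cdot X=0$ when $\langle v_{r_1},\ldots,v_{r_k}\rangle\notin\curlyx$ (the support lemma the paper quotes from \cite[lemma 1.4]{lars}). At that point you have reproduced essentially the same $\Psi_\sigma$--bookkeeping that the direct proof uses, so the detour through a completion $\hat{\curlyx}$ and Theorem~\ref{katzpayne} does not in fact save any work.

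Your ``self-contained alternative'' at the end is, up to minor variations, the paper's actual proof: expand each factor by Lemma~\ref{phi} in terms of the $\Psi_r$, collect coefficients on $\Psi_{s_1}\cdots\Psi_{s_k}\cdot X$, dispose of the tuples not spanning a cone of $\curlyx$ by the same support lemma, and for the remaining tuples show the coefficients vanish. Where you argue ``the $\Psi_r$ restrict to algebraically independent linear forms on each maximal cone,'' the paper instead fixes a $k$-dimensional cone $\langle v_{t_1},\ldots,v_{t_k}\rangle$, evaluates the identity at $a_1v_{t_1}+\cdots+a_kv_{t_k}$, and applies the Vandermonde Lemma~\ref{vandermonde}; the two are equivalent ways of saying the monomials in the $\Psi_{t_i}$ are linearly independent. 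Also note that what you restrict to a cone should be the \emph{difference} of the two representations of $f$ (which is the zero function), not ``$f=0$.'' That alternative is the argument I would flesh out, making the support lemma explicit rather than parenthetical.
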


The proof of the proposition makes use of the following technical lemma:

\begin{lemma}
\label{vandermonde}
Let $c_{b_1\ldots b_s}$ be real numbers such that $\sum_{b_1+\ldots + b_s=k}a_1^{b_1}\cdots a_s^{b_s}\cdot c_{b_1\ldots b_s}=0$ for all $a_i> 0$. Then all $c_{b_1\ldots b_s}$ are $0$.
\end{lemma}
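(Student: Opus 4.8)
The plan is to reduce Lemma \ref{vandermonde} to the statement that the monomials of a fixed total degree $k$ in $s$ variables are linearly independent as functions on the positive orthant $\R_{>0}^s$. Concretely, the hypothesis says that the homogeneous polynomial $P(a_1,\ldots,a_s):=\sum_{b_1+\ldots+b_s=k}c_{b_1\ldots b_s}a_1^{b_1}\cdots a_s^{b_s}$ vanishes identically on $\R_{>0}^s$, and we want to conclude that $P$ is the zero polynomial, i.e.\ all its coefficients vanish.

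First I would note that a polynomial in $s$ variables vanishing on the nonempty open set $\R_{>0}^s\subseteq\R^s$ must be the zero polynomial; this is the standard fact that $\R[a_1,\ldots,a_s]$ has no zero divisors and that an open set is Zariski dense, but to keep the argument self-contained I would instead give the elementary induction on $s$. For $s=1$ the claim is that a one-variable polynomial with infinitely many roots is zero. For the inductive step, fix $a_1,\ldots,a_{s-1}>0$ arbitrary; then $a_s\mapsto P(a_1,\ldots,a_{s-1},a_s)$ is a one-variable polynomial vanishing for all $a_s>0$, hence is identically zero, so each of its coefficients, which are polynomials in $a_1,\ldots,a_{s-1}$, vanishes for all positive $a_1,\ldots,a_{s-1}$; by the induction hypothesis those coefficients are all zero, so $P$ is zero.

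Alternatively, and perhaps more in the spirit of the lemma's name, I would extract an honest Vandermonde determinant: substituting $a_i=t^{e_i}$ for suitable distinct positive integers $e_i$ and varying $t$ over positive reals turns the hypothesis into the vanishing of a single-variable polynomial in $t$ whose exponents $\sum_i b_i e_i$ are pairwise distinct (choose the $e_i$ growing fast enough, e.g.\ $e_i=(k+1)^{i}$, so that the base-$(k+1)$ digits recover the tuple $(b_1,\ldots,b_s)$); since a nonzero polynomial in $t$ cannot vanish on an interval, all coefficients $c_{b_1\ldots b_s}$ vanish. I expect the main (very mild) obstacle to be purely bookkeeping: checking that the chosen exponents $e_i$ do separate all the multi-indices with $b_1+\ldots+b_s=k$, which is exactly the base-$(k+1)$ digit argument and presents no real difficulty. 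I would present the short induction as the primary proof and perhaps mention the Vandermonde substitution in a sentence.
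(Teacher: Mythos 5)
Your induction on $s$ is correct and is essentially the paper's proof: the paper encodes the one-variable step as invertibility of the $(k+1)\times(k+1)$ Vandermonde matrix $(i^j)$, which is precisely the ``degree-$\leq k$ polynomial with infinitely many (or just $k+1$) roots is zero'' fact you invoke when you fix $a_1,\ldots,a_{s-1}$ and let $a_s$ vary. The base-$(k+1)$ substitution $a_i=t^{(k+1)^i}$ you mention as an aside is a valid alternative but is not the route taken in the paper.
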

\begin{proof}
For $a_1\in\{1,\ldots,k+1\}$ and any $a_2,\ldots,a_s>0$ we have
\[0=\sum_{b_1+\ldots + b_s=k}a_1^{b_1}\cdots a_s^{b_s}\cdot c_{b_1\ldots b_s}= \sum_{b_1=0}^{k}a_1^{b_1}\sum_{b_2+\ldots+b_s=k-b_1} a_2^{b_2}\cdots a_s^{b_s}c_{b_1\ldots b_s}.\]
Since the Vandermonde matrix ${(i^j)}_{i=1,\ldots,k+1,j=0,\ldots,k}$ is regular, it follows that \[\sum_{b_2+\ldots+b_s=k-b_1} a_2^{b_2}\cdots a_s^{b_s}c_{b_1\ldots b_s}=0\] for all $a_2,\ldots,a_s> 0$ and all $b_1\in\{0,\ldots,k\}$. Hence the claim follows by induction.
\end{proof}

\begin{proof}[Proof of proposition \ref{prod}]
We choose a unimodular fan structure $\curlyx$ of $X$ such that all $\varphi_p^{i},\gamma_p^{j}$ are linear on every cone of $\curlyx$. Let $v_1,\ldots,v_m$ be the primitive integral vectors of the rays $r_1,\ldots,r_m$ of $\curlyx$. By lemma \ref{phi} $\varphi_p^{i}=\sum_{s=1}^{m}\varphi_p^{i}(v_s)\cdot\Psi_s$ and we have 
\begin{eqnarray*}
f=\sum_{i\in I} \varphi_1^{i}\cdots\varphi_k^{i}&=&\sum_{i\in I}\left(\sum_{s=1}^{m}\varphi_1^{i}(v_s)\cdot\Psi_s\right)\cdots\left(\sum_{s=1}^{m}\varphi_k^{i}(v_s)\cdot\Psi_s\right)\\ &=& \sum_{i\in I}\sum_{\ 1\leq s_1\leq\ldots\leq s_k\leq m \ }\sum_{\sigma\in S_k}\varphi_1^{i}(v_{s_{\sigma(1)}})\cdots\varphi_k^{i}(v_{s_{\sigma(k)}})\cdot  \Psi_{s_1}\cdots\Psi_{s_k}\\ &=& \sum_{1\leq s_1\leq\ldots\leq s_k\leq m \ }\underbrace{\sum_{\sigma\in S_k}\sum_{i\in I}\varphi_1^{i}(v_{s_{\sigma(1)}})\cdots\varphi_k^{i}(v_{s_{\sigma(k)}})}_{=:\lambda_{s_1\ldots s_k}\in\Z}\cdot  \Psi_{s_1}\cdots\Psi_{s_k},
\end{eqnarray*}
where the middle sums run over all permutations $\sigma\in S_k$.
The linearity and commutativity of intersecting with rational functions \cite[proposition 3.7]{AR} allow us to perform the same computation for the intersection product with $X$; thus we have  \[\sum_{i\in I} \varphi_1^{i}\cdots\varphi_k^{i}\cdot X=\sum_{1\leq s_1\leq\ldots\leq s_k\leq m}\lambda_{s_1\ldots s_k}\cdot\Psi_{s_1}\cdots\Psi_{s_k}\cdot X.\] 

Analogously we set $\mu_{s_1\ldots s_k}:=\sum_{\sigma\in S_k}\sum_{j\in J}\gamma_1^{j}(v_{s_{\sigma(1)}})\cdots\gamma_k^{j}(v_{s_{\sigma(k)}})$
and use the same argument for the $\gamma_p^j$ to conclude that
\[
\sum_{i\in I} \varphi_1^{i}\cdots\varphi_k^{i}\cdot X-\sum_{j\in J} \gamma_1^{j}\cdots\gamma_k^{j}\cdot X = \sum_{1\leq s_1\leq\ldots\leq s_k\leq m}\underbrace{(\lambda_{s_1\ldots s_k}-\mu_{s_1\ldots s_k})}_{=:c_{s_1\ldots s_k}}\cdot\Psi_{s_1}\cdots\Psi_{s_k}\cdot X.
\]
As $\Psi_{w_1}\cdots\Psi_{w_k}\cdot X=0$ if the cone $\langle w_1,\ldots,w_k\rangle\notin\curlyx$ (this can be showed in the same way as \cite[lemma 1.4]{lars}) the above is equal to
\[\sum_{\langle v_{s_1},\ldots ,v_{s_k}\rangle\in\curlyx}c_{s_1\ldots s_k}\cdot\Psi_{s_1}\cdots\Psi_{s_k}\cdot X.\]
Note that the $s_i$ are not necessarily pairwise disjoint; that means the sum runs over all cones in $\curlyx$ of dimension at most $k$.
It suffices to prove that all $c_{s_1\ldots s_k}$ occurring in the above sum are equal to $0$: Therefore, we fix integers $1\leq t_1<\ldots < t_k\leq m$ such that $\langle v_{t_1},\ldots, v_{t_k}\rangle\in\curlyx^{(k)}$ and claim that $c_{s_1\ldots s_k}=0$ for all $1\leq s_1\leq\ldots\leq s_k\leq m$ with $\{s_1,\ldots,s_k\}\subseteq\{t_1,\ldots,t_k\}$: For all $a_1,\ldots,a_k > 0$ we have 
\begin{eqnarray*}
0&=&(f-f)(a_1 v_{t_1}+\ldots + a_k v_{t_k})\\&=&\left (\sum_{1\leq s_1\leq\ldots\leq s_k\leq m}c_{s_1\ldots s_k}\cdot\Psi_{s_1}\cdots\Psi_{s_k}\right )(a_1 v_{t_1}+\ldots + a_k v_{t_k})\\ &=& \sum_{\stackrel{1\leq s_1\leq\ldots\leq s_k\leq m}{\{s_1,\ldots,s_k\}\subseteq\{t_1,\ldots,t_k\}}}c_{s_1\ldots s_k}\prod_{i=1}^{k}a_i^{\betrag{\{j:s_j=t_i\}}}\\ &=&\sum_{b_1+\ldots+b_k=k}c_{\underbrace{t_1\ldots t_1}_{b_1 \text{ times}}\ldots\underbrace{t_k\ldots t_k}_{b_k \text{ times}}}a_1^{b_1}\cdots a_k^{b_k},
\end{eqnarray*}
where the last sum runs over non-negative integers $b_i$ that sum up to $k$. Now the claim follows from lemma \ref{vandermonde}. 
\end{proof}

\begin{remark}
\label{easy}
It is clear that the intersection product of definition \ref{localint} is linear and that $f\cdot (g\cdot X)=(f\cdot
g)\cdot X$ for two piecewise polynomials $f,g$ on a fan cycle $X$. Furthermore, it
follows straight from definition that $f\cdot X=0$ if $f\in\MPP{X}$.
\end{remark}

\begin{example}
Let $f\in\PPP^{2}(\ldz)$ be the piecewise polynomial on the tropical fan cycle $\ldz:=\max\{x,y,z,0\}\cdot\R^3$ shown in the following picture.  
Let $\curlyx$ be the corresponding fan structure of $\ldz$ having rays $(-1,0,0)$, $(-1,-1,0)$, $(0,-1,0)$, $(0,0,-1)$, $(1,1,0)$, $(1,1,1)$.

\begin{tabular}{p{5.5cm}p{0.1cm}p{5.5cm}}
\parbox[c]{1em}{\includegraphics[scale=0.245]{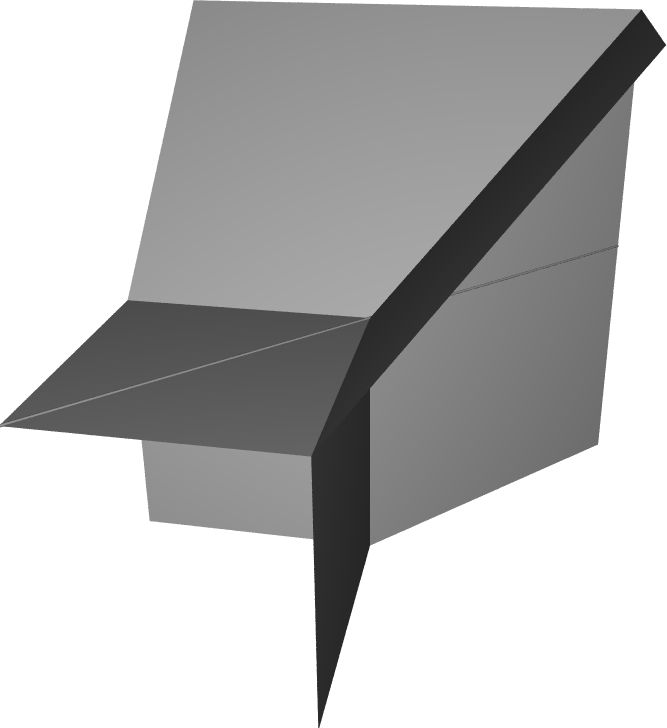}}
\put(70,50){$xy$}
\put(119,50){\textcolor{white}{$x^2$}}
\put(126,33){$xz$}
\put(104,-10){$2xz$}
\put(69,-35){\textcolor{white}{$yz$}}
\put(35,-30){$xz$}
\put(38,-10){$2x^2$}
\put(15,6){$y^2+xy$}
&&
\parbox[c]{1em}{\includegraphics[scale=0.245]{l32.jpg}}
\put(70,50){$0$}
\put(119,50){\textcolor{white}{$0$}}
\put(126,33){$0$}
\put(104,-10){$2xz$}
\put(69,-35){\textcolor{white}{$yz$}}
\put(35,-30){$xz$}
\put(38,-10){$0$}
\put(15,6){$y^2-xy$}
\put(-4,6){$\sigma_1$}
\put(15,-30){$\sigma_2$}
\put(56,-70){$\sigma_3$}
\put(104,-37){$\sigma_4$}
\\
\center{$f\in\PPP^2(\curlyx)\subsetneq\PPP^2(\ldz)$} && \center{$f-2x\cdot\Psi_a-x\cdot\Psi_b$}
\end{tabular}

We want to compute $f\cdot\ldz$. Therefore, we use the idea of the proof of proposition \ref{zerlegung} to obtain a representation of $f$ as a sum of products of rational
functions: We first make $f$ vanish on the rays of $\curlyx$ by adding appropriate (linear) multiples of the rational functions $\Psi_r$ (with $r$ ray of $\curlyx$). Doing this we obtain $f-2x\cdot\Psi_a-x\cdot\Psi_b$, where $a=(-1,-1,0)$ and $b=(1,1,1)$. Now it is easy to see that
\[f-2x\cdot\Psi_a-x\cdot\Psi_b=-\Psi_{\sigma_1}+\Psi_{\sigma_2}+\Psi_{\sigma_3}-2\cdot \Psi_{\sigma_4}.\]
As an easy calculation shows that $\Psi_{\sigma_i}\cdot\ldz=1\cdot\{0\}$ for all $i$ (alternatively use lemma \ref{Psisigma}) we obtain by definition \ref{localint} and remark \ref{easy} that $f\cdot\ldz=(-1+1+1-2)\cdot\{0\}=-1\cdot\{0\}$.
\end{example}

\begin{remark}
\label{locality}
Let $X$ be a tropical cycle in the vector space $V$. Let $p$ be a point in $X$. Recall that in \cite[section 1.6]{hannes} the star $\Star_X(p)$ is defined to be the 
tropical fan cycle in $V$ associated to $\Star_{\curlyx}(p)$, where $\curlyx$ is a polyhedral structure of $X$ containing the polyhedron $\{p\}$. That means $\Star_X(p)$ is the fan cycle whose support consists of vectors $v$ such that $p+\epsilon v\in\betrag{X}$ for small (positive) $\epsilon$ and whose weights are inherited from $X$.\\
A piecewise polynomial $f\in\PPk{X}$ on a fan cycle $X$ induces a piecewise polynomial $f^p\in\PPk{\Star_X(p)}$ obtained by restricting $f$ to a small neighbourhood of $p$ and then extending it in the obvious way to $\Star_X(p)$. As $f=\sum_{i=1}^{s}\varphi_1^{i}\cdots\varphi_k^{i}$ implies that $f^p=\sum_{i=1}^{s}(\varphi_1^{i})^p\cdots(\varphi_k^{i})^p$, it follows from \cite[proposition 1.1]{hannes} that \[f^p\cdot \Star_X(p)=\Star_{f\cdot X}(p).\]
\end{remark}

Our next aim is to use piecewise polynomials to define higher codimension cocycles on tropical cycles $X$. Prior to that we give a definition of (abstract) tropical cycles consistent with the 
definition of smooth tropical varieties in chapter 6 of \cite{francoisrau} (to which we refer for further details). Recall that a topological space is called weighted if each point from a dense open subset is equipped with a non-zero integer weight which is locally constant (in the dense open subset). A cycle $X$ in a vector space can be made weighted by assigning to each interior point of a maximal polyhedra $\sigma\in\curlyx$ the weight of $\sigma$, where $\curlyx$ is a polyhedral structure of $X$.

\begin{definition}
\label{defcycle}
An (abstract) tropical cycle is a weighted topological space $X$ together with an 
    open cover $\{U_i\}$ and homeomorphisms
  \[
    \phi_i : U_i \rightarrow W_i \subseteq |X_i|
  \]
  such that
  \begin{itemize} 
  \item
    each $W_i$ is an (euclidean) open subset of $|X_i|$ for some tropical fan cycle $X_i$ (in some vector space)
  \item
    for each pair $i,k$, the transition map
    \[
      \phi_k \circ \phi_i^{-1} :
        \phi_i(U_i \cap U_k) \rightarrow \phi_k(U_i \cap U_k)
    \]
    is the restriction of an affine $\Z$-linear map, i.e.\
    the composition of a translation by a real vector and a $\Z$-linear map
   \item the weight of a point $p\in U_i$ is equal to the weight of $\phi_i(p)$ in $X_i$ (if both are defined).

\end{itemize}
If all $X_i$ can be chosen to be matroid varieties (cf.\ section 4 or \cite[section 2]{francoisrau}) modulo lineality spaces, then we call $X$ a smooth tropical variety. 
Recall that in \cite[definition 6.2]{francoisrau} a subcycle $C$ of $X$ is defined as a weighted subset of $\betrag{X}$ such that 
for all $i$ the induced weighted set $\phi_i(C\cap U_i)$ agrees with the intersection of $W_i$ and a tropical cycle in $X_i$.
\end{definition}

\begin{definition}
\label{PPU}
Let $X$ be a fan cycle (in a vector space $V$) and $U$ an (euclidean) open subset in $\betrag{X}$. A continuous function $f:U\rightarrow \R$ is called piecewise polynomial of degree $k$ on $U$ if it is locally around each point
$p\in U$ a finite sum $\sum_j (f_p^j\circ T_p^j)$ of compositions of (restrictions of) piecewise polynomials $f_p^j\in\PPk{\Star_X(p)}$ and translations $T_p^j$. We define $f_p\in\PPk{\Star_X(p)}$ to be the (uniquely defined) sum of the $f_p^j$.  The group of piecewise polynomials of degree $k$ on $U$ is denoted $\PPk{U}$. Furthermore, $\MPP{U}$ is the group of piecewise polynomials $f$ (of degree $k$) on $U$ such that $f_p\in\MPP{\Star_X(p)}$ for all $p$.
\end{definition}

We now generalise the notion of Cartier divisors (i.e.\ codimension $1$ cocycles) introduced in \cite[definition 6.1]{AR} by using piecewise polynomials (instead of piecewise linear functions) as local descriptions:

\begin{definition}
\label{cocycle}
A representative of a codimension $k$ cocycle on the cycle $X$ is defined as a set $\{(V_1,f_1),\ldots,(V_p,f_p)\}$ satisfying
\begin{itemize}
\item $\{V_i\}$ is an open cover of $\betrag{X}$
\item $(f_j\circ \phi_i^{-1})_{\mid \phi_i(U_i\cap V_j)}\in\PPk{\phi_i(U_i\cap V_j)}$ for all $i,j$
\item $((f_j-f_k)\circ \phi_i^{-1})_{\mid \phi_i(U_i\cap V_j\cap V_k)}\in\MPP{\phi_i(U_i\cap V_j\cap V_k)}$ for all $i,j,k$.
\end{itemize}
The sum of two (representatives of) codimension $k$ cocycles $\{(V_j,f_j)\}$ and $\{(V'_k,f'_k)\}$ is defined to be $\{(V_j\cap V'_k),f_j+f'_k)\}$.
We call two representatives of codimension $k$ cocycles $\{(V_j,f_j)\}$ and $\{(V'_k,f'_k)\}$ equivalent (and identify them) if we have for all $i,s$ that \[(g_s\circ \phi_i^{-1})_{\mid \phi_i(U_i\cap K_s)}\in\MPP{\phi_i(U_i\cap K_s)},\] where $\{(K_s,g_s)\}:=\{(V_j,f_j)\}-\{(V'_k,f'_k)\}$.\\
The group of codimension $k$ cocycles on $X$ is denoted $\Ak{k}{X}$. The multiplication of two cocycles can be defined in the same way as the addition; therefore, there is a graded ring $\Aa{X}:=\oplus_{k\in\N}\Ak{k}{X}$ called ring of piecewise polynomials.
\end{definition}

\begin{example}
For any cycle $X$, $\Ak{1}{X}$ is the group of Cartier divisors $\Div(X)$ introduced in \cite[definition 6.1]{AR}.
\end{example}

\begin{example}
Vector bundles $\pi:F\rightarrow X$ of degree $r$ on tropical cycles $X$ have been introduced in \cite[definition 1.5]{vectorbundles}. A rational section $s:X\rightarrow F$ with open cover $U_1,\ldots,U_s$ induces rational functions $s_{ij}:=p_j^{(i)}\circ\Phi_i\circ s:U_i\rightarrow \R$ (cf. \cite[definition 1.18]{vectorbundles}). Here the $\Phi_i$ are homeomorphisms identifying $\pi^{-1}(U_i)$ with $U_i\times\R^r$ and the $p_j^{(i)}:U_i\times\R^r\rightarrow \R$ are projections to the $j$-th component of $\R^r$. For any $k\leq r$ one obtains the cocycle $s^{(k)}:=\{(U_i,\sum_{1\leq j_1\leq\ldots\leq j_k\leq r}s_{ij_1}\cdots s_{ij_k})\}\in\Ak{k}{X}$ (see \cite[definition 2.1]{vectorbundles}).
\end{example}

We are now ready to construct an intersection product of cocycles with tropical cycles. As cocycles are locally given by piecewise polynomials, the idea is to glue together the local intersection products of definition \ref{localint}.   

\begin{defconst}
\label{cocycle*cycle}
Let $f=\{(V_j,f_j)\}\in\Ak{k}{X}$ be a codimension $k$ cocycle on a tropical cycle $X$.
For a point $p$ in $X$ we choose $i,j$ such that $p\in U_i\cap V_j$.  
By definition $(f_j\circ\phi_i^{-1})_p\in\PPk{\Star_{X_i}(\phi_i(p)}$ is a piecewise polynomial on the star around $\phi_i(p)$. Thus we can define the local intersection $(f_j\circ\phi_i^{-1})\cdot (X_i\cap\phi_i(U_i\cap V_j))$
by \[\Star_{(f_j\circ\phi_i^{-1})\cdot (X_i\cap\phi_i(U_i\cap V_j))}(\phi_i(p)):=(f_j\circ\phi_i^{-1})_p\cdot \Star_{X_i}(\phi_i(p)).\]
As $\phi_k\circ\phi_i^{-1}$ induces an isomorphism of the stars $\Star_{X_i}(\phi_i(p))$ and $\Star_{X_k}(\phi_k(p))$, the definition does not depend on the choice of open set $U_i$. \\
We can glue together the local intersections to a subcycle $f\cdot X\in\Zk{\dim X-k}{X}$ of $X$: If $p\in U_i\cap V_j\cap V_s$, then  $((f_j-f_s)\circ\phi_i^{-1})_p\in\MPP{\Star_{X_i}(\phi_i(p)}$. Therefore, it follows by remark \ref{easy} that the local intersections agree on the overlaps.
\end{defconst}

\begin{remark}
\label{remintprod}
In the same way we can also intersect cocycles on $X$ with any subcycle of $X$. Hence, definition \ref{cocycle*cycle} gives rise to an intersection product
\[\Ak{k}{X}\times\Zk{l}{X}\rightarrow\Zk{l-k}{X}, \ \ (f,C) \mapsto f\cdot C.\]
\end{remark}

\begin{example}
The following picture shows a cocycle $f=\{(V_1,f_1),(V_2,f_2)\}\in\Ak{2}{\R^2}$,
\begin{center}
\includegraphics[scale=0.25]{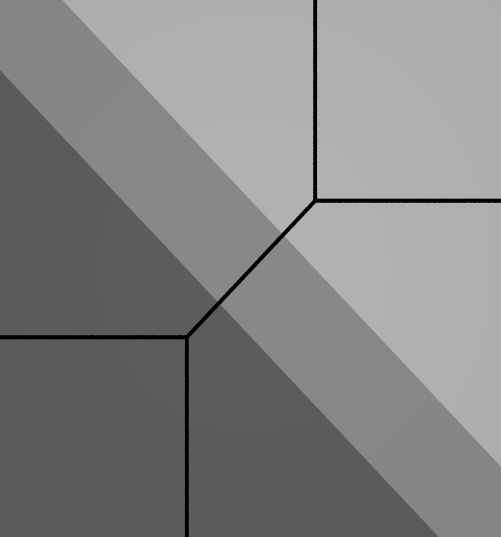}
\put(-110,5){\textcolor{white}{\small{$V_1$}}}
\put(-15,120){\small{$V_2$}}
\put(-30,14){\small{$V_1\cap V_2$}}
\put(-100,30){\textcolor{white}{\small{$0$}}}
\put(-70,30){\textcolor{white}{\small{$(x+1)^2$}}}
\put(-102,70){\textcolor{white}{\small{$(y+1)^2$}}}
\put(-30,100){\small{$0$}}
\put(-91,110){\small{$(x-2)^2$}}
\put(-45,65){\small{$(y-2)^2$}}
\put(-87,41){\small{$R$}}
\put(-56,84){\small{$Q$}}
\end{center}
with $R=(-1,-1)$, $Q=(2,2)$. Note that for $p=(t,t)$ with $ -1< t < 2$ we have \[(f_1-f_2)_p=(f_1)_p-(f_2)_p=(\max\{x,y\})^2-(\min\{x,y\})^2=(y+x)\cdot\max\{x-y,y-x\},\] 
which is in $\LL\PPP^1(\Star_p(\R^2))$ (cf.\ definition \ref{PPU}); hence $f$ is indeed a cocycle. 
As $(f_1)_R$ is the piecewise polynomial of example \ref{f*R2} we conclude that the multiplicity of $R$ in $f\cdot\R^2$ is $1$. We can deduce from an analogous argument for the point $Q$ that $f\cdot\R^2=R+Q$. 
\end{example}

As in the case of rational functions and Cartier divisors \cite[proposition 4.7 and 7.6]{AR}, we can pull back piecewise polynomials and cocycles along morphisms.

\begin{definition}
If $h:Y\rightarrow X$ is a morphism of fan cycles and $f\in\PPaffk{X}$ is a piecewise polynomial on $X$, then we define the pull-back $h^{\ast}f\in\PPaffk{Y}$ of $f$ along the morphism $h$ as $h^{\ast}f:=f\circ h$.\\
Analogously the pull-back $h^{\ast}f\in\Ak{k}{Y}$ of a codimension $k$ cocycle $f=\{(V_j,f_j)\}\in\Ak{k}{X}$ along a morphism $h:Y\rightarrow X$ of any cycles (cf.\ \cite[remark 6.5]{francoisrau}) is defined to be $\{(h^{-1}(V_j),f_j\circ h)\}$.
\end{definition}

\begin{proposition}
The following properties hold for cocycles $f\in\Ak{k}{X}$ and $g\in\Ak{l}{X}$ on a cycle $X$.
\begin{enumerate}
\item $\Ak{k}{X}\times\Zk{l}{X}\rightarrow\Zk{l-k}{X},\ \ (b,C) \mapsto b\cdot C$ is bilinear.
\item $f\cdot (g\cdot X)=(f\cdot g)\cdot X=g\cdot(f\cdot X)$.
\item $f\cdot(h_{\ast}E)=h_{\ast}(h^{\ast}f\cdot E)$ for a morphism $h:Y\rightarrow X$ and a subcycle $E$ of $Y$.
\item If $X\in\Zk{d}{V_X},Y\in\Zk{e}{V_Y}$ are cycles in vector spaces, then $(f\cdot X)\times Y=\pi^{\ast}f\cdot(X\times Y)$, where $\pi:V_X\times V_Y\rightarrow V_X$ maps $(x,y)$ to $x$.
\item If $D$ is rationally equivalent to $0$ on $X$ \cite[definition 1]{AR2}, then so is $f\cdot D$. 
\end{enumerate}
If $X$ and $Y$ are smooth and $C,D$ are subcycles of $X$, then intersection products and pull-backs (\cite[definition 6.4 and 8.1]{francoisrau}) have the following additional properties:
\begin{enumerate}
\setcounter{enumi}{5}
\item If $D=f\cdot X$, then $D\cdot_{X} C=f\cdot C$.
\item If $b$ is a cocycle on $C$, then $(b\cdot C)\cdot D=b\cdot (C\cdot D)$.
\item If $D=f\cdot X$ and $h:Y\rightarrow X$ is a morphism, then $h^{\ast}D=h^{\ast}f\cdot Y$.
\end{enumerate}
\end{proposition}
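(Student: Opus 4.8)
The plan is to reduce every assertion to the corresponding, already established, property of intersections of rational functions with tropical cycles. Two observations make this work. First, by remark \ref{locality} and definition and construction \ref{cocycle*cycle}, the intersection of a cocycle with a cycle -- and, by remark \ref{remintprod}, with any subcycle -- is determined locally by the products of definition \ref{localint} on the stars, so it suffices to prove the fan-theoretic versions of (1)--(5) and then glue. Second, by proposition \ref{zerlegung} every piecewise polynomial appearing in such a local picture is a finite sum of products of rational fan functions, and on a representation $f=\sum_i\varphi_1^i\cdots\varphi_k^i$ the intersection $f\cdot X$ equals $\sum_i\varphi_1^i\cdots\varphi_k^i\cdot X$ by definition, independently of the representation by proposition \ref{prod}. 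Hence for (1)--(5) it is enough to check that the property at hand is stable under finite sums of products of rational functions and then to invoke the rational-function analogue from \cite{AR} (resp.\ \cite{AR2}).

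Concretely: (1) splits into linearity in the cocycle, which is remark \ref{easy}, and linearity in the cycle, which after passing to local representations is the linearity of intersecting with rational functions \cite[proposition 3.7]{AR}. In (2) the equality $f\cdot(g\cdot X)=(f\cdot g)\cdot X$ is remark \ref{easy} on the fan level (and glues), while $f\cdot(g\cdot X)=g\cdot(f\cdot X)$ follows from the commutativity of rational-function intersections \cite[proposition 3.7]{AR}. For (3) I would write $f_j\circ h=\sum_i(\varphi_1^i\circ h)\cdots(\varphi_k^i\circ h)$ locally, observe that each $\varphi_p^i\circ h$ is an ordinary pull-back of a rational function, and iterate the projection formula for rational functions of \cite{AR} over the factors before summing. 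For (4) I would likewise reduce to a local representation, use that $\pi^\ast\varphi_p^i=\varphi_p^i\circ\pi$ is a rational function on $X\times Y$, and apply the compatibility of rational-function intersections with cartesian products from \cite{AR} factor by factor.

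For (5) I would unwind \cite[definition 1]{AR2}: $D$ rationally equivalent to $0$ on $X$ means it is a finite sum of cycles of the shape $\pi_\ast(\psi\cdot(E\times\R))$ with $E$ a subcycle, $\psi$ a suitable (bounded) rational function and $\pi\colon X\times\R\to X$ the projection. Combining the projection formula (3) with the product compatibility (4) rewrites $f\cdot\pi_\ast(\psi\cdot(E\times\R))=\pi_\ast\big(\psi\cdot((f\cdot E)\times\R)\big)$, which is again of the prescribed shape -- with $E$ replaced by the subcycle $f\cdot E$ and $\psi$ restricted accordingly -- hence rationally equivalent to $0$; linearity finishes (5).

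Finally, (6)--(8) require matching the above with the intersection product $\cdot_X$ and the pull-back on smooth varieties of \cite[definition 6.4 and 8.1]{francoisrau}, which are themselves defined locally through matroidal diagonal cocycles and graphs; this is where I expect the real work to be. For (6) I would argue that, by construction in \cite{francoisrau}, intersecting with $D=f\cdot X$ is carried out locally by the very cocycle $f$, so that $D\cdot_X C$ and $f\cdot C$ have identical local descriptions. For (7) I would use theorem \ref{surjective} to write, locally, $D=\gamma\cdot X$ for a cocycle $\gamma$; then (6) gives $C\cdot D=\gamma\cdot C$ and $(b\cdot C)\cdot D=\gamma\cdot(b\cdot C)$, whence $(b\cdot C)\cdot D=\gamma\cdot(b\cdot C)=(\gamma\cdot b)\cdot C=b\cdot(\gamma\cdot C)=b\cdot(C\cdot D)$ by remark \ref{easy}. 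For (8) I would feed $D\times Y=(f\cdot X)\times Y=\pi_X^\ast f\cdot(X\times Y)$ (this is (4)) into the graph description $h^\ast D=\pi_{Y\ast}(\Gamma_h\cdot_{X\times Y}(D\times Y))$, use (7) to move the cocycle $\pi_X^\ast f$ out of the intersection, and then apply (3) together with the identification $\Gamma_h\cong Y$ under which $\pi_X^\ast f$ becomes $h^\ast f$. The main obstacle is thus the bookkeeping in (6)--(8): once the local descriptions of \cite{francoisrau} are matched with the $f\cdot(-)$ of definition and construction \ref{cocycle*cycle}, the statements are formal, whereas (1)--(5) are routine translations via propositions \ref{zerlegung} and \ref{prod}.
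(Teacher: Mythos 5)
Your proposal matches the paper's approach: reduce each assertion (except (5)) to a local statement about piecewise polynomials on fan cycles, decompose those piecewise polynomials into sums of products of rational functions via propositions \ref{zerlegung} and \ref{prod}, and then invoke the corresponding rational-function facts from \cite{AR} and \cite{francoisrau}, while (5) is handled by combining (3) with the argument of \cite[lemma 2(b)]{AR2}. The paper's actual proof is far terser---it simply cites \cite[4.8, 9.6, 9.7, 9.8]{AR} and \cite[4.5, 8.2]{francoisrau} for the local statements without spelling out the iteration over products---and your explicit bootstrap of (7) from (6) via theorem \ref{surjective} is a harmless variant of reducing directly to the rational-function version in \cite{francoisrau}, but the underlying strategy is the same.
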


\begin{proof}
We first notice that all statements except (5) can be verified locally (that means for piecewise polynomials on fan cycles). But the local statements are either trivial or
follow directly from the respective property of rational functions (\cite[4.8, 9.6, 9.7, 9.8]{AR}, \cite[4.5, 8.2]{francoisrau}). Using (3) the proof of (5) is the same
as the proof of \cite[lemma 2(b)]{AR2}. 
\end{proof}

For the rest of the chapter we focus on cocycles on the cycle $\R^n$. We use theorem \ref{katzpayne} to 
establish a Poincar\'e duality for this case:

\begin{theorem}
\label{ausschneiden}
For any $n\geq k$, the following is a group isomorphism: \[\Ak{k}{\R^n}\rightarrow\Zk{n-k}{\R^n}, \ \ f\mapsto f\cdot\R^n.\]
\end{theorem}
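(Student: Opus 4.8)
The plan is to combine the Katz--Payne isomorphism of theorem~\ref{katzpayne} with the local-to-global structure of cocycles and the compatibility result of proposition~\ref{equality}. The key observation is that $\R^n$ is, in the sense of definition~\ref{defcycle}, the ``trivial'' abstract tropical cycle whose every star is again $\R^n$; so a cocycle on $\R^n$ should be determined by genuinely global data. First I would reduce the statement to a statement about piecewise polynomials: I claim that the natural map $\PPaffk{\R^n}/\MPPaff{\R^n}\to\Ak{k}{\R^n}$ sending a piecewise polynomial $f$ (on some complete fan structure of $\R^n$) to the cocycle $\{(\R^n,f)\}$ is an isomorphism. Surjectivity uses that, given a representative $\{(V_j,f_j)\}$, the local pieces $f_j$ are by definition~\ref{PPU} locally sums of piecewise polynomials on stars; since every star of $\R^n$ is $\R^n$ itself, and since the cocycle condition forces the $f_j$ to agree up to $\MM\PPP$ on overlaps, one can patch the $f_j$ together (after subtracting suitable linear-times-lower-degree corrections) into a single globally defined continuous piecewise polynomial on a common (complete) refinement. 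Injectivity is exactly the statement that $\{(\R^n,f)\}$ is the zero cocycle iff $f_p\in\MPP{\Star_{\R^n}(p)}=\MPP{\R^n}$ for all $p$, which one checks is equivalent to $f\in\MPPaff{\R^n}$ after passing to a unimodular fan structure and using that $\MM\PPP$ is generated by $l\cdot g$ with $l$ linear.

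Next I would identify the composite $\PPaffk{\R^n}/\MPPaff{\R^n}\to\Ak{k}{\R^n}\to\Zk{n-k}{\R^n}$ with the Katz--Payne map $f\mapsto f\cdot\R^n$ of theorem~\ref{katzpayne}. Here the point is that the intersection product $\{(\R^n,f)\}\cdot\R^n$ of defconstruction~\ref{cocycle*cycle} is computed, by definition~\ref{localint}, by first writing $f=\sum_i\varphi_1^i\cdots\varphi_k^i$ as a sum of products of rational fan functions (possible by proposition~\ref{zerlegung}, after unimodular refinement) and then forming $\sum_i\varphi_1^i\cdots\varphi_k^i\cdot\R^n$ as an inductive product of rational functions; by proposition~\ref{equality} this equals $f\cdot\R^n$ in the Katz--Payne sense. (Well-definedness independent of the chosen representation is proposition~\ref{prod}, and independence of the fan structure is part of theorem~\ref{katzpayne}.) So the theorem follows once the first reduction is in place, since theorem~\ref{katzpayne} gives that $\PPaffk{\R^n}/\MPPaff{\R^n}\to\Zk{n-k}{\R^n}$ is an isomorphism (theorem~\ref{katzpayne} is stated for a fixed complete unimodular $\Delta$, but the independence-under-refinement clause together with the fact that any complete fan admits a unimodular refinement upgrades it to $\PPaffk{\R^n}$).

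I expect the main obstacle to be the surjectivity half of the first reduction: showing that an arbitrary cocycle representative $\{(V_j,f_j)\}$ on $\R^n$ is equivalent to one of the form $\{(\R^n,f)\}$ with $f$ a single global piecewise polynomial. The subtlety is that the $f_j$ are only given locally (as in definition~\ref{PPU}) and only agree modulo $\MM\PPP$ on overlaps, so patching requires a partition-of-unity-style argument adapted to the piecewise-polynomial setting: one must choose, on each $U_i\cap V_j$, the correction terms in $\MM\PPP$ compatibly, which is where the precise structure of $\MPP{U}$ and the fact that $\R^n$ is simply connected (so that the \v{C}ech $1$-cocycle $\{(f_j-f_k)\}$ with values in the sheaf $\MM\PPP$ is a coboundary) enters. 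A clean way to organize this is to work on a fixed complete unimodular fan $\Delta$ refining all the local fan structures, note that a piecewise polynomial on an open subset of $|\R^n|=\R^n$ that is globally defined there is genuinely an element of $\PPk{\Delta}$, and then run the gluing one dimension at a time over the cones of $\Delta$, exactly as in the proof of proposition~\ref{zerlegung}; the cocycle condition ensures the obstructions at each stage lie in $\MM\PPP$ and can be absorbed.
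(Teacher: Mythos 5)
Your key reduction --- that the map $\PPaffk{\R^n}/\MPPaff{\R^n}\rightarrow\Ak{k}{\R^n}$, $f\mapsto\{(\R^n,f)\}$, is an isomorphism --- is false, and this sinks the argument. That map is not surjective: its image consists of cocycles whose intersection with $\R^n$ is a \emph{fan} cycle (since $f\cdot\R^n\in\Zaff{n-k}(\R^n)$ for every fan piecewise polynomial $f$), whereas $\Ak{k}{\R^n}$ must contain, by the very theorem you are proving, cocycles cutting out arbitrary (non-fan) subcycles --- for instance, a point away from the origin when $n=k$. In fact, if your reduction held, then composing with the claimed isomorphism $\Ak{k}{\R^n}\cong\Zk{n-k}{\R^n}$ would give $\PPaffk{\R^n}/\MPPaff{\R^n}\cong\Zk{n-k}{\R^n}$, contradicting the local statement $\PPaffk{\R^n}/\MPPaff{\R^n}\cong\Zaff{n-k}(\R^n)\subsetneq\Zk{n-k}{\R^n}$ that you yourself correctly deduce from theorem~\ref{katzpayne} and proposition~\ref{equality}.

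The gluing argument you propose cannot rescue this: by definition~\ref{PPU}, the local pieces $f_j$ are piecewise-polynomial with respect to \emph{translated} fan structures centered at different points, and these admit no common fan refinement, so there is no ``fixed complete unimodular fan $\Delta$ refining all the local fan structures'' as you suggest. Patching would at best yield a continuous function that is piecewise polynomial on some non-fan polyhedral subdivision of $\R^n$, which is not an element of $\PPaffk{\R^n}$. The paper instead keeps the cocycle as the intermediate object and globalizes directly: for surjectivity, given a subcycle $C$, choose an open cover $\{V_j\}$ and translations $T_j$ so that $T_j(\curlyc\cap V_j)$ is a tropical open fan, use the local surjectivity to pick fan piecewise polynomials $f_j$ cutting out the associated fan cycles, and set $f:=\{(V_j,f_j\circ T_j)\}$; the \emph{local injectivity} is what guarantees that the third condition of definition~\ref{cocycle} holds, so $f$ is a cocycle with $f\cdot\R^n=C$. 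Injectivity of $\Ak{k}{\R^n}\rightarrow\Zk{n-k}{\R^n}$ is then immediate from the local statement applied at each point. You have the correct local ingredients (the Katz--Payne isomorphism upgraded via propositions~\ref{equality}, \ref{zerlegung}, \ref{prod}), but the globalization must go through cocycles built from translated local data, not through a single global fan piecewise polynomial.
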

\begin{proof}
We first consider the corresponding local statement: Since every fan cycle in $\R^n$ has a fan structure lying in a complete unimodular fan (\cite[lemma 5]{AR2} and \cite[section 2.6]{fulton}), we can use theorem \ref{katzpayne} to conclude that \[\PPaffk{\R^n}/\MPPaff{\R^n}\rightarrow\Zaff{n-k}(\R^n), \ \ \ g\mapsto g\cdot\R^n\] is an isomorphism.  \\
For the global case we start by proving the surjectivity. So let $C\in\Zk{n-k}{\R^n}$ be an arbitrary subcycle of $\R^n$ and let $\curlyc$ be a polyhedral structure of $C$. We choose an open cover $\{V_j\}$ of $\R^n$ and translation functions $T_j$ such that $T_j(\curlyc\cap V_j)$ is a tropical open fan (that is the set of intersections of the cones of a fan with an euclidean open set which contains the origin, cf.\ \cite[definition 5.3]{AR}) for all $j$. By the local statement we can choose for each $j$ a piecewise polynomial $f_j$  whose intersection with $\R^n$ is the tropical fan associated to $T_j(\curlyc\cap V_j)$. Then $f=\{(V_j,f_j\circ T_j)\}\in\Ak{k}{\R^n}$ is a cocycle satisfying $f\cdot\R^n=C$. By construction the difference of two of these local functions gives a zero intersection on the overlaps of two open sets, i.e.\ the local intersection $(f_j\circ T_j-f_k\circ T_k)\cdot (\R^n\cap V_1\cap V_2)=0$; therefore, the injectivity part of the local statement implies that the third condition of definition \ref{cocycle} is fulfilled and $f$ is indeed a cocycle on $\R^n$. \\
The injectivity follows immediately from the local statement.
\end{proof}

\begin{remark}
Let $Y$ be a codimension $1$ fan subcycle of the fan cycle $X$ and let $\curlyy,\curlyx$ be simplicial fan structures of $Y,X$ such that $\curlyy$ is a subfan of $\curlyx$. It is easy to see from the definition of intersecting with rational functions in section 2 that finding a rational function $\varphi$ which is linear on the cones of $\curlyx$ and satisfies $\varphi\cdot\curlyx=\curlyy$ can be reduced to solving a system of linear equations with variables $\varphi(v),v\in\curlyx^{(1)}$. Unless $X=\R^n$ these systems are in no way guaranteed to have a (rational) solution. In case that $X=\R^n$, $\varphi$ can also be found by inductively determine the linear functions $\varphi_{\sigma}$ along a path of adjacent maximal cones of $\curlyx$ \cite[proof of corollary 2.4]{FS}.\\ 
Except in a few special cases (cf.\ remark \ref{rn-rm}), we do not have an algorithm to construct a concrete piecewise polynomial on $\R^n$ which cuts out a given fan subcycle $Y$ of codimension greater than $1$. 
\end{remark}

\section{Cocycles on matroid varieties}

In this section we analyse cocycles on smooth varieties. As mentioned in definition \ref{defcycle} a tropical cycle is smooth if its local building blocks are matroid varieties modulo lineality spaces (denoted by $\trop(M)/L$). 

Let $M$ be a (loopfree) matroid with ground set $E=\{1,\ldots,n\}$ (see \cite{matroidtheory} for standard notation about matroids). Recall that we can associate a tropical fan $\mathcal{B}(M)$ to the matroid $M$ in the following way: Let $\{e_1,\ldots,e_n\}$ be the standard basis of $\R^n$. For each flat (i.e.\ closed set) $F$ of $M$ we set $V_F=-\sum_{i\in F} e_i$. The tropical fan $\mathcal{B}(M)$ consists of cones 
\[\left\{\sum_{i=1}^{p}\lambda_i\cdot V_{F_i}: \lambda_1,\ldots,\lambda_{p-1}\geq 0, \lambda_p\in\R\right\},\]
where $\emptyset\subsetneq F_1\subsetneq\ldots\subsetneq F_{p-1}\subsetneq F_p=E$ is a chain of flats in $M$ and all of the maximal cones have trivial weight $1$. The associated tropical fan cycle is denoted $\trop(M)$ and called matroid variety. Matroid varieties have a natural lineality space $\R\cdot(e_1+\ldots +e_n)$.

The following theorem states that every subcycle of a matroid variety can be cut out by a cocycle. The idea of the proof is to delete elements of the matroid in order to make use of the $\R^n$ case: If an element $i$ of $E$ is not a coloop, then the deletion of $i$ \cite[section 1.3, 3.1.5]{matroidtheory} corresponds to a projection. This means that 
the push-forward of $\trop(M)$ along the projection $\pi_i:\R^n\rightarrow \R^{n-1}$ forgetting the $i$-th coordinate is equal to the matroid variety $\trop(M\setminus\{i\})$ corresponding to the deletion matroid \cite[lemma 3.8]{francoisrau}.

\begin{theorem}
\label{surjective}
For any $k\leq d:= \dim (\trop(M)/L)$, the following morphism is surjective: \[\Ak{k}{\trop(M)/L}\rightarrow\Zk{d-k}{\trop(M)/L}, \ \ f\mapsto f\cdot\trop(M)/L.\]
\end{theorem}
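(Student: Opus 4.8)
The plan is to reduce the statement to the already-established surjectivity over $\R^n$ (theorem \ref{ausschneiden}) by deleting non-coloop elements of the matroid, exactly as suggested in the paragraph preceding the theorem. The key observation is that $\trop(M)/L$ has dimension $d = \rank(M)-1$, so if $M$ has $n$ elements and we can delete $n-1-d$ of them without deleting a coloop, we arrive at a matroid $M'$ of rank $\rank(M)$ on $d+1$ elements, whose matroid variety modulo lineality is isomorphic to $\R^d$ (the uniform matroid $U_{d+1,d+1}$ gives $\trop(U_{d+1,d+1})/L \cong \R^d$). One first reduces to the case where $M$ is \emph{connected}: if $M = M_1 \oplus M_2$, then $\trop(M)/L \cong (\trop(M_1)/L_1)\times(\trop(M_2)/L_2)$ and every subcycle can be built up componentwise using part (4) of the preceding proposition on products together with an induction on the number of connected components, so it suffices to treat the connected case. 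For a connected matroid on $n > \rank(M)$ elements there is always a non-coloop element $i$ (otherwise every element is a coloop and $M = U_{n,n}$ has rank $n$), and the corresponding deletion $M\setminus\{i\}$ is still connected — or we iterate choosing elements carefully so that connectedness (equivalently, the needed rank) is preserved until only $d+1$ elements remain.

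The next step is the geometric translation of deletion. As recalled in the excerpt (\cite[lemma 3.8]{francoisrau}), for a non-coloop $i$ the projection $\pi_i : \R^n \to \R^{n-1}$ forgetting the $i$-th coordinate restricts to a morphism $\trop(M) \to \trop(M\setminus\{i\})$ with ${\pi_i}_*\trop(M) = \trop(M\setminus\{i\})$; this descends to a morphism $\bar\pi_i : \trop(M)/L \to \trop(M\setminus\{i\})/L'$ that is degree-preserving and surjective on supports. Iterating, we obtain a morphism $h : \trop(M)/L \to \trop(M')/L' \cong \R^d$ with $h_*(\trop(M)/L) = \R^d$ (i.e.\ $h$ has "degree one" onto $\R^d$). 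Now, given an arbitrary subcycle $C \in \Zk{d-k}{\trop(M)/L}$, one pushes it forward: $h_*C \in \Zk{d-k}{\R^d}$. By theorem \ref{ausschneiden} applied to $\R^d$, there is a cocycle $g \in \Ak{k}{\R^d}$ with $g\cdot\R^d = h_*C$. Pulling back along $h$ gives a cocycle $h^*g \in \Ak{k}{\trop(M)/L}$, and by property (3) of the preceding proposition,
\[
h_*\bigl(h^*g \cdot (\trop(M)/L)\bigr) = g \cdot h_*(\trop(M)/L) = g\cdot\R^d = h_*C .
\]
So $h^*g\cdot(\trop(M)/L)$ and $C$ have the same push-forward under $h$; the remaining task is to correct the difference inside the fibers of $h$.

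The main obstacle — and the part requiring real work — is precisely this fiber correction: a subcycle with zero push-forward under $h$ need not itself be zero, so $h^*g\cdot(\trop(M)/L)$ may differ from $C$ by a subcycle $C'$ supported on (unions of) the "extra" directions contracted by $h$. The fix is to set this up as an induction on $n-\rank(M)$, peeling off one deleted element $i$ at a time. At each stage, after matching push-forwards under a single projection $\bar\pi_i$, the discrepancy $C - (\text{pulled-back class})$ is a subcycle of $\trop(M)/L$ whose push-forward under $\bar\pi_i$ vanishes; one must show such a subcycle can itself be cut out by a cocycle. Here one uses the fiber-bundle-like structure of $\bar\pi_i$: over each point the situation is a local product, and a subcycle killed by the projection is, locally, a product of a cocycle-cut subcycle in the fiber direction with the base — which one handles by the local (star-wise) description of cocycles from definition \ref{cocycle*cycle}, remark \ref{locality}, and property (4) for products, reducing the fiber factor again to a lower-dimensional instance of the theorem (or directly to the $\R^m$ case). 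Assembling these local cocycles into a global one uses that the transition maps are affine $\Z$-linear and that the local intersection product of definition \ref{localint} only depends on the star, exactly as in the gluing argument of theorem \ref{ausschneiden}. Carrying this induction through cleanly — in particular ensuring at each step that one can choose the deleted element to be a non-coloop and that the bookkeeping of lineality spaces is consistent — is the technical heart of the argument.
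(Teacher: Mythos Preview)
Your overall strategy---induction via deletion of non-coloops, with base case $\R^n$---matches the paper's, but the execution diverges at the crucial point and leaves a real gap.

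The problem is your ``fiber correction'' step. You delete one element $i$ at a time, match push-forwards under the single projection $\bar\pi_i$, and are left with a discrepancy $C'$ satisfying $(\bar\pi_i)_*C'=0$. You then assert that such a $C'$ is ``locally a product of a cocycle-cut subcycle in the fiber direction with the base''. This is not justified: the map $\bar\pi_i$ is not a fibre bundle (its fibres vary), and a subcycle killed by a \emph{single} projection can be geometrically complicated---there is no reason it should decompose locally as a product, and nothing in the paper's toolkit (remark \ref{locality}, property (4), the gluing from theorem \ref{ausschneiden}) gives you this. Your reduction to connected matroids has a similar issue: subcycles of $(\trop(M_1)/L_1)\times(\trop(M_2)/L_2)$ are not in general sums of products of subcycles of the factors, so property (4) alone does not let you ``build them up componentwise''.

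The paper sidesteps both difficulties by using \emph{all} non-coloop projections at once rather than one. It sets $C_0:=C$ and $C_i:=C_{i-1}-\pi_i^{\ast}(\pi_i)_{\ast}C_{i-1}$ for each non-coloop $i=1,\ldots,p$; by the induction hypothesis (on the codimension of $\trop(M)$ in its ambient space) each $(\pi_i)_{\ast}C_{i-1}$ is cut out by a cocycle $f_i$ on $\trop(M\setminus\{i\})$. The point is that after running through all $p$ projections, the residual cycle $C_p$ satisfies $(\pi_i)_{\ast}C_p=0$ for \emph{every} non-coloop $i$ simultaneously (this uses \cite[lemma 9.3]{francoisrau}), and then \cite[lemma 9.4]{francoisrau} forces $C_p=0$ outright. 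Hence $C=\sum_i(\pi_i^{\ast}f_i)\cdot\trop(M)$, with no fibre-structure argument needed. The passage to general lineality spaces is then a short product argument. What makes this work---and what your single-projection approach lacks---is the external input that vanishing under \emph{all} non-coloop projections already forces a subcycle of $\trop(M)$ to be zero.
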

\begin{proof}
We first consider the case where $L=\{0\}$ and $\{a\}$ is a flat for every $a\in E$. We use induction on the codimension of $\trop(M)$: The induction start ($\trop(M)=\R^n$) was proved in theorem \ref{ausschneiden}. Let $C$ be an arbitrary subcycle of $\trop(M)$ of codimension $k$. After renaming the elements, we can assume that $\{1,\ldots,p\}$ is the set of elements of $E$ which are not coloops. For $i\in\{1,\ldots,p\}$ we set
\[C_0:=C, \ \ \ C_i:=C_{i-1}-\pi_{i}^{\ast}{\pi_{i}}_{\ast}C_{i-1},\]
where the $\pi_i:\trop(M)\rightarrow\trop(M\setminus\{i\})$ denote the projections forgetting the $i$-th coordinate. 
The induction hypothesis allows us to choose cocycles $f_i\in\Ak{k}{\trop(M\setminus\{i\})}$ such that $f_i\cdot\trop(M\setminus\{i\})={\pi_i}_{\ast}C_{i-1}$ for $i\in\{1,\ldots,p\}$.  
\cite[Lemma 9.3]{francoisrau} implies that ${\pi_i}_{\ast}C_p=0$ for all $i$; thus $C_p=0$ by \cite[lemma 9.4]{francoisrau}. It follows that
\[C=\sum_{i=1}^p \pi_i^{\ast}{\pi_i}_{\ast}C_{i-1}=\sum_{i=1}^p \pi_i^{\ast}(f_i\cdot\trop(M\setminus\{i\}))=\sum_{i=1}^p (\pi_i^{\ast}f_i)\cdot\trop(M).\]
As $\pi_R:\trop(M)\rightarrow \trop(M\setminus R)$ is an isomorphism for $R=\cl_M(\{a\})\setminus \{a\}$, where $\cl_M(\{a\})$ denotes the smallest flat in $M$ containing the set $\{a\}$, this also implies the claim for arbitrary matroid varieties $\trop(M)$.\\
Now let $C$ be a subcycle of $\trop(M)/L$. Since $\trop(M)\cong\trop(M)/L\times L$ we can choose a cocycle $f$ with $f\cdot(\trop(M)/L\times L)=C\times L$. It follows that $f\cdot(\trop(M)/L\times\{0\})=C\times\{0\}$. Therefore, we can conclude that $s^{\ast}f\cdot\trop(M)/L=C$, where $s:\trop(M)/L\rightarrow\trop(M)/L\times L$ maps $x$ to $(x,0)$.
\end{proof}

\begin{remark}
It follows in the same way that each fan cycle $D\in\Zaff{d-k}(\trop(M)/L)$ is cut out by a piecewise polynomial $f\in\PPaffk{\trop(M)/L}$. 
\end{remark}

\begin{remark}
An alternative proof (in the case of a trivial lineality space $L=\{0\}$) has recently been found by Esterov in \cite[corollary 4.2]{esterov}.
\end{remark}

\begin{remark}
\label{rn-rm}
If $N,M$ are matroids such that $\trop(N)$ is a codimension $k$ subcycle of $\trop(M)$, then \cite[corollary 3.6, proposition 3.10]{francoisrau} gives a concrete piecewise polynomial $f=\varphi_1\cdots\varphi_k\in\PPk{\trop(M)}$ in terms of the rank functions $\rank_M,\rank_N$ of $M,N$ such that $f\cdot\trop(M)=\trop(N)$: The rational functions $\varphi_i$ are linear on the cones of $\mathcal{B}(M)$ and satisfy for all flats $F$ of $M$ that $\varphi_i(V_F)=-1$, if  $\rank_M(F)-\rank_N(F)\geq i$, and $\varphi_i(V_F)=0$ otherwise. 
\end{remark}

The rest of the section is devoted to show that the (surjective) morphism of theorem \ref{surjective} is an isomorphism in some cases. Unfortunately, so far we have not been able prove this in general.

\begin{proposition}
\label{codim1bijection}
Let $d:= \dim (\trop(M)/L)$. Then the following is an isomorphism: \[\PPOaff{1}{\trop(M)/L}/\MM\PPOaff{0}{\trop(M)/L}\rightarrow\Zaff{d-1}(\trop(M)/L), \ \ f\mapsto f\cdot\trop(M)/L.\]
\end{proposition}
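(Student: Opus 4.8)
The plan is to reduce the statement to the already-established codimension-1 isomorphism on $\R^n$ (theorem \ref{ausschneiden}, or rather its local incarnation via theorem \ref{katzpayne}). Surjectivity is already contained in theorem \ref{surjective}, so the entire task is to prove injectivity: if $f\in\PPOaff{1}{\trop(M)/L}$ is a rational fan function with $f\cdot\trop(M)/L=0$, then $f\in\MM\PPOaff{0}{\trop(M)/L}$, i.e.\ $f$ is (globally) linear. First I would dispose of the lineality space: since $\trop(M)\cong\trop(M)/L\times L$ and degree-$1$ piecewise polynomials pull back and push forward compatibly with the product decomposition (using property (4) of the intersection-product proposition, and the fact that a linear function on a product is linear iff its restriction to each factor is), it suffices to treat the case $L=\{0\}$; and as in the proof of theorem \ref{surjective} we may further assume $\{a\}$ is a flat for every $a\in E$, because $\pi_R:\trop(M)\to\trop(M\setminus R)$ is an isomorphism for $R=\cl_M(\{a\})\setminus\{a\}$.

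Next I would run the same deletion/induction scheme as in theorem \ref{surjective}, on the codimension of $\trop(M)$, with base case $\trop(M)=\R^n$ handled by theorem \ref{katzpayne} (or theorem \ref{ausschneiden}) in codimension $1$. Suppose $f\cdot\trop(M)=0$ and write $\{1,\dots,p\}$ for the non-coloops. The key is the projection formula (property (3)): for each non-coloop $i$ with $\pi_i:\trop(M)\to\trop(M\setminus\{i\})$ we have ${\pi_i}_*(f\cdot\trop(M))=(\text{pushforward of }f)\cdot\trop(M\setminus\{i\})$ in a suitable sense — more precisely I want to say that $f$ restricted along a section of $\pi_i$, or $f$ viewed on $\trop(M\setminus\{i\})$ via the isomorphism on an open part, cuts out ${\pi_i}_*(f\cdot\trop(M))=0$, so by the induction hypothesis that restricted function is linear. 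Having shown this for all $i=1,\dots,p$, one concludes that $f$ becomes linear after projecting away any non-coloop coordinate; since a rational fan function on $\R^n$ that is linear after forgetting each of a set of coordinates whose deletions jointly ``see'' all of $\trop(M)$ must itself be linear (this is exactly the content of $C_p=0$ in theorem \ref{surjective}, dualized), $f\in\MM\PPOaff{0}{\trop(M)}$.

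The technical heart, and the step I expect to be the main obstacle, is making the deletion step rigorous at the level of piecewise polynomials rather than cycles: I need a clean statement that the diagram relating $\PPOaff{1}{\trop(M)}$, $\PPOaff{1}{\trop(M\setminus\{i\})}$ and the intersection maps to Minkowski weights commutes up to $\MM$-terms — i.e.\ that pulling back a rational fan function along $\pi_i$ and then intersecting agrees with pushing forward the intersection cycle. For rational (degree-$1$) functions this is precisely the projection formula \cite[4.8]{AR}/\cite[4.5]{francoisrau} already cited for property (3), combined with the fact (used throughout the paper) that a degree-$1$ piecewise polynomial on $\trop(M\setminus\{i\})$ pulls back under $\pi_i$ to a degree-$1$ piecewise polynomial on $\trop(M)$, and that $\pi_i^*$ is injective on $\PPOaff{1}{\cdot}/\MM$ modulo the coloop coordinate. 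Once this bookkeeping is set up, the Vandermonde-type difficulties of proposition \ref{prod} do not arise because everything is genuinely linear, and the argument closes by the same two lemmas (\cite[lemma 9.3, lemma 9.4]{francoisrau}) invoked in theorem \ref{surjective}. I would therefore spend the bulk of the write-up on this compatibility lemma and keep the induction itself short.
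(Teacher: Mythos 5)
Your strategy diverges from the paper's and, as you yourself flag, the ``technical heart'' never actually gets made rigorous -- and I do not think it can be, at least not in the form you propose. The difficulty is that the projection formula \cite[4.8]{AR} runs the wrong way for your purposes: it computes $f\cdot h_*E$ when $f$ lives on the \emph{target} and you pull it back to the source. Here your $f\in\PPOaff{1}{\trop(M)}$ lives on the \emph{source} $\trop(M)$ and is in general not a pull-back along $\pi_i$ (it will typically depend essentially on the $i$-th coordinate), so there is no cocycle on $\trop(M\setminus\{i\})$ to which the projection formula would apply. ``Restricting $f$ along a section of $\pi_i$'' is not canonical and does not obviously cut out ${\pi_i}_*(f\cdot\trop(M))$. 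Moreover the closing step -- that a rational fan function linear ``after forgetting each non-coloop coordinate'' must itself be linear -- is asserted as a ``dualization'' of \cite[lemmas 9.3, 9.4]{francoisrau}, but those lemmas are statements about cycles vanishing under all projections, not about functions, and I do not see a clean dual statement; you would have to prove one, which seems no easier than the proposition itself.

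The paper avoids deletions entirely and argues directly with the combinatorics of $\mathcal{B}(M)$. After the same preliminary reduction to $\{a\}$ being a flat for every $a$, one first uses the locality of the intersection product (remark \ref{locality}) together with the $\R^n$ case: the star around a point in the relative interior of a maximal cone of $\mathcal{B}(M)$ is a copy of $\R^d$, so $\varphi\cdot\trop(M)=0$ forces $\varphi$ to be affine linear there, hence one may assume $\varphi$ is linear on the cones of $\mathcal{B}(M)$. One then shows by induction on $\rank(F)$ that $\varphi(V_F)=\sum_{a\in F}\varphi(V_{\{a\}})$ for every flat $F$: the rank-$0,1$ cases are immediate, and for $\rank(F)=i\ge 2$ one writes down the explicit zero weight of the codimension-$1$ cone $\tau$ given by a chain $\emptyset=F_0\subsetneq\cdots\subsetneq F_{i-2}\subsetneq F\subsetneq\cdots\subsetneq E$,
\[
0=\sum_{F_{i-2}\subsetneq G\subsetneq F}\varphi(V_G)\;-\;\varphi(V_F)\;-\;\bigl(|\{G:F_{i-2}\subsetneq G\subsetneq F\}|-1\bigr)\,\varphi(V_{F_{i-2}}),
\]
and the induction hypothesis applied to $F_{i-2}$ and the $G$'s pins down $\varphi(V_F)$. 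This is both shorter and gap-free. I would recommend abandoning the deletion scheme here; it is well-suited to the \emph{surjectivity} argument of theorem \ref{surjective}, where one manipulates cycles and pulls back cocycles, but for injectivity you are handed a function on the source and have no mechanism to push it down.
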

\begin{proof}
It remains to prove injectivity. We can assume without loss of generality that $\{a\}$ is a flat in $M$ for every $a\in E$. Let $\varphi$ be a rational fan function with $\varphi\cdot\trop(M)=0$. The star $\Star_{\trop(M)}(p)$ around each point $p$ in the relative interior of a maximal cone of $\mathcal{B}(M)$ is isomorphic to $\R^{\dim \trop(M)}$; therefore, the locality of the intersection product (remark \ref{locality}) and the $\R^n$ case (theorem \ref{ausschneiden}) imply that $\varphi$ is linear around $p$. We can thus assume that $\varphi$ is linear on the cones of $\mathcal{B}(M)$. 
It is sufficient to show (by induction on the rank of $F$) that $\varphi(V_F)=\sum_{a\in F}\varphi(V_{\{a\}})$ for all flats $F$ of $M$. The claim is trivial for flats of rank $0$ and $1$. For a flat $F$ of rank $i\geq 2$, we choose a chain of flats 
$\emptyset=F_0\subsetneq F_1\subsetneq\ldots\subsetneq F_{i-2}\subsetneq F \subsetneq F_{i+1}\subsetneq\ldots\subsetneq F_{\rank(M)}= E$, where each $F_j$ has rank $j$. The corresponding codimension $1$ cone $\tau$ has has weight
\[0=\sum_{\stackrel{G: \text{ flat in } M}{\text{\tiny{$F_{i-2}\subsetneq G\subsetneq F$}}}}\varphi(V_G)-\varphi(V_F)-(|\{G: \text{ flat in } M, F_{i-2}\subsetneq G\subsetneq F\}|-1)\cdot \varphi(V_{F_{i-2}})\] in $\varphi\cdot\trop(M)=0$. Therefore, the claim follows by induction.
The $\trop(M)/L$ case is an immediate consequence of the $\trop(M)$ case.
\end{proof}

\begin{proposition}
\label{dim0bijection}
Let $X$ be a locally irreducible fan cycle of dimension $d$ which is connected in codimension $1$ (i.e.\ any two maximal cones are connected via a sequence of maximal cones such that the intersection of consecutive cones has codimension $1$). Then \[\PPOaff{d}{X}/\MPPOaff{d}{X}\rightarrow \Zaff{0}(X)=\Z, \ f\mapsto f\cdot X\] is an injective morphism of groups. As matroid varieties modulo lineality spaces are locally irreducible and connected in codimension 1 (this follows from \cite[lemma 2.4]{francoisrau}), the above is an isomorphism of groups if $X=\trop(M)/L$.
\end{proposition}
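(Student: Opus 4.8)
It suffices to prove injectivity: the surjectivity needed for the final assertion follows from Theorem \ref{surjective} and the remark following it, and the fact that $\trop(M)/L$ is locally irreducible and connected in codimension $1$ is \cite[lemma 2.4]{francoisrau}. So let $f\in\PPOaff{d}{X}$ satisfy $f\cdot X=0$; I have to show $f\in\MPPOaff{d}{X}$.

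First I would replace the given fan structure by a unimodular one $\curlyx$ with $f\in\PPOaff{d}{\curlyx}$ (possible by the remark after Proposition \ref{zerlegung}; local irreducibility and connectedness in codimension $1$ are inherited by $\curlyx$) and apply Proposition \ref{zerlegung} to write $f=\sum_{\sigma\in\curlyx^{(\leq d)}}a_{\sigma}\Psi_{\sigma}$ with $a_{\sigma}$ a homogeneous integer polynomial of degree $d-\dim\sigma$. For $\dim\sigma<d$ the polynomial $a_{\sigma}$ has positive degree, hence is a sum of products of linear forms, so $a_{\sigma}\Psi_{\sigma}\in\MPPOaff{d}{X}$; thus modulo $\MPPOaff{d}{X}$ we are reduced to $g:=\sum_{\sigma\in\curlyx^{(d)}}a_{\sigma}\Psi_{\sigma}$ with all $a_{\sigma}\in\Z$. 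By Lemma \ref{Psisigma} (or a direct computation with the inductive intersection product) $\Psi_{\sigma}\cdot X=\omega_{X}(\sigma)\cdot\{0\}$ for every maximal cone $\sigma$, so by linearity of the intersection product (Remark \ref{easy}, Definition \ref{localint}) the hypothesis $f\cdot X=0$ amounts to the single numerical identity $\sum_{\sigma\in\curlyx^{(d)}}a_{\sigma}\,\omega_{X}(\sigma)=0$.

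It remains to see that this already forces $g\in\MPPOaff{d}{X}$. Fix a codimension-$1$ cone $\tau$ of $\curlyx$ and, using unimodularity, write each maximal $\sigma>\tau$ as $\sigma=\tau+r_{\sigma}$ with $r_{\sigma}$ a ray with primitive generator $w_{\sigma}$. Evaluating on the cones $\sigma>\tau$ one checks that for every linear form $l$ vanishing on $V_{\tau}$ one has $l\cdot\Psi_{\tau}=\sum_{\sigma>\tau}l(w_{\sigma})\Psi_{\sigma}$, the left side lying in $\MPPOaff{d}{X}$. As $l(w_{\sigma})$ only depends on $u_{\sigma/\tau}=[w_{\sigma}]\in V/V_{\tau}$, local irreducibility identifies the achievable vectors $(l(w_{\sigma}))_{\sigma>\tau}$ with the orthogonal complement of the (up to scalar unique) relation $(\omega_{X}(\sigma))_{\sigma>\tau}$; in particular $\omega_{X}(\sigma')\Psi_{\sigma}-\omega_{X}(\sigma)\Psi_{\sigma'}\in\MPPOaff{d}{X}$ whenever $\sigma,\sigma'$ share a codimension-$1$ face. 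Now connectedness in codimension $1$ enters: choosing a spanning tree of the graph of maximal cones and stripping off leaves, $(a_{\sigma})_{\sigma\in\curlyx^{(d)}}$ becomes a combination of these local relations, the identity $\sum_{\sigma}a_{\sigma}\omega_{X}(\sigma)=0$ being exactly what makes the last coefficient vanish (using $X\neq0$, so some $\omega_{X}(\sigma)\neq0$); hence $g\in\MPPOaff{d}{X}$. The step I expect to be delicate is running this last argument over $\Z$ rather than $\Q$: it requires the normal vectors $u_{\sigma/\tau}$ at each codimension-$1$ cone to generate a saturated sublattice of $\Lambda/\Lambda_{\tau}$ (equivalently, $\PPOaff{d}{X}/\MPPOaff{d}{X}$ to be torsion-free), which is where the fine local structure of matroid varieties — their codimension-$1$ links being, up to a linear factor, the fans of projective spaces — is used.
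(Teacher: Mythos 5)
Your argument is essentially the paper's: you invoke Proposition~\ref{zerlegung} to reduce to $\sum_{\sigma\in\curlyx^{(d)}}a_{\sigma}\Psi_{\sigma}$ modulo $\MPPOaff{d}{X}$, use Lemma~\ref{Psisigma} to turn $f\cdot X=0$ into $\sum a_{\sigma}\omega(\sigma)=0$, establish the pairwise relation $\omega(\sigma')\overline{\Psi_{\sigma}}=\omega(\sigma)\overline{\Psi_{\sigma'}}$ for cones sharing a facet, and propagate by connectedness in codimension~$1$. The pairwise relation is exactly the paper's Lemma~\ref{diffPsisigma}, which you re-derive cleanly via the identity $l\cdot\Psi_{\tau}=\sum_{\sigma>\tau}l(w_{\sigma})\Psi_{\sigma}$ for $l$ vanishing on $V_{\tau}$, instead of citing it; that is a nicer packaging of the same computation. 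The integrality worry you flag at the end is real and is in fact present in the paper's own proof as well: the paper writes $\overline{\Psi_{\sigma}}=\tfrac{\omega(\sigma)}{\omega(\alpha)}\overline{\Psi_{\alpha}}$, a rational-coefficient relation, and then concludes $\overline{f}=0$ from $\sum a_{\sigma}\omega(\sigma)=0$ without addressing possible torsion in $\PPOaff{d}{X}/\MPPOaff{d}{X}$ or the $\Z$-linearity of the $l$ constructed in Lemma~\ref{diffPsisigma}. For matroid varieties (all weights $1$) this is harmless, which is what the corollary actually uses; for general locally irreducible connected $X$ the claim of injectivity over $\Z$ needs the extra care you indicate. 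So: same route as the paper, with a slightly more self-contained treatment of the key lemma, and a commendably honest flag of the one point both proofs gloss over.
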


For a proof we need the following two lemmas:

\begin{lemma}
\label{Psisigma}
Let $\curlyx$ be a unimodular fan structure of a fan cycle $X$ of dimension $d$. Let $\sigma\in\curlyx$ be a maximal cone. Then $\Psi_{\sigma}\cdot X=\omega_{\curlyx}(\sigma)\cdot \{0\}$.
\end{lemma}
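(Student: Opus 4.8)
The plan is to prove Lemma~\ref{Psisigma} by a direct computation using the definition of intersecting with rational functions from section~2, applied to the piecewise polynomial $\Psi_\sigma = \prod_{i: v_i \in \sigma} \Psi_i$, together with the observation that $\Psi_\sigma$ is supported on $\bigcup_{\rho > \sigma}\rho = \sigma$ since $\sigma$ is maximal. First I would reduce to the fan $\Star_X(0)$-type setting: we may assume $\curlyx$ is a unimodular fan structure and $\sigma = \langle v_1,\ldots,v_d\rangle$ is generated by a subset of a lattice basis. Write $\Psi_\sigma = \Psi_1\cdots\Psi_d$, and compute $\Psi_\sigma\cdot X$ inductively via $\Psi_\sigma\cdot X = \Psi_1\cdot(\Psi_2\cdots(\Psi_d\cdot X))$, using the definition $\Psi_\sigma\cdot X := \sum_i \varphi_1^i\cdots\varphi_k^i\cdot X$ of definition~\ref{localint} together with remark~\ref{easy} (associativity of the intersection product).

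The key step is to track the support. Since $\Psi_i$ vanishes on every ray of $\curlyx$ except $r_i$, and more generally $\Psi_\tau$ vanishes away from $\bigcup_{\rho>\tau}\rho$, each partial product $\Psi_j\cdots\Psi_d$ is supported on $\bigcup_{\rho \supseteq \langle v_j,\ldots,v_d\rangle}\rho$, which for $j=1$ collapses to $\sigma$ itself (as $\sigma$ is maximal). Therefore at each stage the intersection product only "sees" the single cone $\sigma$ and its faces, and the relevant weight computations are those on the cones of $\sigma$. Concretely, after intersecting with $\Psi_d,\ldots,\Psi_2$ one is left with a $1$-dimensional fan cycle supported on the ray $r_1 = \R_{\geq 0}v_1$ (plus possibly faces) whose weight on that ray one reads off; then intersecting with $\Psi_1$, using $\Psi_1(v_1)=1$ and the balancing/definition formula, yields the point $\{0\}$ with the accumulated weight $\omega_{\curlyx}(\sigma)$. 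One can organise this cleanly by noting that at the first step $\Psi_d\cdot X$ has weight $\omega_{\curlyx}(\sigma)\cdot\Psi_d(v_d) = \omega_{\curlyx}(\sigma)$ on the codimension-$1$ face $\langle v_1,\ldots,v_{d-1}\rangle$ of $\sigma$ and weight $0$ on the other codimension-$1$ faces through which $\Psi_d$ vanishes — here one uses that $\Psi_d$ restricted to the face $\langle v_1,\ldots,v_{d-1}\rangle$ is identically zero, so the $\varphi_\tau(\sum\ldots)$ correction term drops out — and then induct.

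An even slicker route, which I would actually write, is to use Lemma~\ref{phi}/Proposition~\ref{zerlegung} the other way: $\Psi_\sigma$ is literally a product of the rational fan functions $\Psi_1,\ldots,\Psi_d$, so by definition~\ref{localint} its intersection with $X$ equals the inductive intersection $\Psi_1\cdots\Psi_d\cdot X$. Then one applies the projection-formula/transversality behaviour of intersecting with $\Psi_i$: intersecting $X$ with $\Psi_d$ picks out, with multiplicity, the "coordinate hyperplane dual to $v_d$", and iterating collapses $X$ down to the single point $0$ with multiplicity equal to the weight $\omega_{\curlyx}(\sigma)$ of the unique maximal cone $\sigma$ containing all of $v_1,\ldots,v_d$. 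The main obstacle is bookkeeping: making sure that at each inductive step no other cones contribute, i.e. that $\Psi_j\cdots\Psi_d\cdot X$ is supported exactly on $\langle v_1,\ldots,v_{j-1}\rangle$ with weight $\omega_{\curlyx}(\sigma)$ on its open part, which requires that $\sigma$ be the only maximal cone of $\curlyx$ containing the cone $\langle v_j,\ldots,v_d\rangle$ — true because $\curlyx$ is unimodular (simplicial) and $\sigma \in \curlyx^{(d)}$ has the $v_i$ as part of a lattice basis, so any cone containing $\langle v_j,\ldots,v_d\rangle$ and of dimension $\le d$ with these among its rays, together with balancing/local structure, forces it to be a face of $\sigma$ or $\sigma$ itself. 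Once that support statement is established the weight computation is the one-line application of the definition of $\varphi\cdot\curlyx$ with the constant term vanishing because $\Psi_i$ is zero on the relevant face.
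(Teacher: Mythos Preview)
Your core approach---compute $\Psi_\sigma\cdot X$ as the iterated intersection $\Psi_{v_1}\cdots\Psi_{v_d}\cdot X$ and track how the weight propagates through the faces of $\sigma$---is exactly the paper's proof. The paper's argument is the one-line observation you state: the weight of $\langle v_1,\ldots,v_{i-1}\rangle$ in $\Psi_{v_i}\cdots\Psi_{v_d}\cdot\curlyx$ equals the weight of $\langle v_1,\ldots,v_i\rangle$ in $\Psi_{v_{i+1}}\cdots\Psi_{v_d}\cdot\curlyx$, because $(\Psi_{v_i})_{|\langle v_1,\ldots,v_{i-1}\rangle}=0$ kills the correction term and $\Psi_{v_i}$ evaluates to $1$ only on the normal vector $v_i$. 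That is all that is needed, since the final cycle is $0$-dimensional and hence supported at the origin.

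Where your proposal goes wrong is in the surrounding support claims. You assert that $\Psi_j\cdots\Psi_d\cdot X$ is supported exactly on $\langle v_1,\ldots,v_{j-1}\rangle$, and that $\Psi_d\cdot X$ has weight $0$ on all codimension-$1$ faces other than $\langle v_1,\ldots,v_{d-1}\rangle$. Both are false. Take $X=\R^2$ with the fan on rays $\pm e_1,\pm e_2$ and $\sigma=\langle e_1,e_2\rangle$: then $\Psi_{e_2}=\max(y,0)$ and $\Psi_{e_2}\cdot\R^2$ is the full $x$-axis (weight $1$ on \emph{both} rays $\pm e_1$), not just the ray $\R_{\ge 0}e_1$. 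Your justification (``$\sigma$ is the only maximal cone containing $\langle v_j,\ldots,v_d\rangle$'') is also false in this example: $\langle e_2\rangle$ lies in two maximal cones. The point is that you never need these support statements. The paper only tracks the weight of the single cone $\langle v_1,\ldots,v_{i-1}\rangle$ at each step; the weights on all other cones are irrelevant because they do not enter the formula for the next step's weight on $\langle v_1,\ldots,v_{i-2}\rangle$ (only cones of the form $\langle v_1,\ldots,v_{i-2}\rangle + r$ do, and among those $\Psi_{v_{i-1}}$ singles out $r=r_{i-1}$). Strip out the support discussion and what remains is precisely the paper's proof.
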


\begin{proof}
Let $v_1,\ldots,v_d$ be the primitive integral vectors generating the rays of $\sigma$. It follows from the definition of $\Psi_{v_i}$ and the intersection product with a rational function that the weight of the cone $\langle v_1,\ldots v_{i-1}\rangle$ in $\Psi_{v_{i}}\cdots\Psi_{v_d}\cdot \curlyx$ is equal to the weight of $\langle v_1,\ldots v_{i}\rangle$ in $\Psi_{v_{i+1}}\cdots\Psi_{v_d}\cdot \curlyx$. This implies the claim.
\end{proof}

\begin{lemma}
\label{diffPsisigma}
Let $\curlyx$ be a unimodular fan structure of a fan cycle $X$ of dimension $d$. Let $\sigma_1,\sigma_2\in\curlyx^{(d)}$ having a common face $\tau\in\curlyx^{(d-1)}$. 
If $X$ is locally irreducible then 
\[\omega_\curlyx(\sigma_2)\cdot\Psi_{\sigma_1}-\omega_\curlyx(\sigma_1)\cdot\Psi_{\sigma_2}=l\cdot\Psi_{\tau},\] for some linear function $l$ on $X$.   
\end{lemma}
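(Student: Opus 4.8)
The plan is to reduce the asserted identity to a statement about the $1$-dimensional fan transverse to $\tau$, by first extracting the factor $\Psi_\tau$, and then to build $l$ out of the balancing condition at $\tau$.

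Since $\curlyx$ is simplicial and $\tau$ is a codimension $1$ face of the maximal cone $\sigma_i$, there is a unique ray $r_i$ of $\sigma_i$ not lying in $\tau$, and $\sigma_i=\tau+r_i$; hence the rays of $\sigma_i$ are the rays of $\tau$ together with $r_i$, and by definition of the $\Psi$-functions $\Psi_{\sigma_i}=\Psi_\tau\cdot\Psi_{r_i}$. Therefore
\[\omega_\curlyx(\sigma_2)\,\Psi_{\sigma_1}-\omega_\curlyx(\sigma_1)\,\Psi_{\sigma_2}=\Psi_\tau\cdot g,\quad\text{where } g:=\omega_\curlyx(\sigma_2)\,\Psi_{r_1}-\omega_\curlyx(\sigma_1)\,\Psi_{r_2}\]
is a rational fan function on $X$. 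As $\Psi_\tau$ vanishes away from $\bigcup_{\sigma>\tau}\sigma$, it suffices to produce a linear function $l$ on $X$ whose restriction to $\bigcup_{\sigma>\tau}\sigma$ coincides with $g$; then $l\cdot\Psi_\tau=g\cdot\Psi_\tau$ is the required expression.

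To this end, let $\sigma_1,\dots,\sigma_m$ ($m\geq 2$) be all maximal cones of $\curlyx$ having $\tau$ as a face, write $\sigma_j=\tau+r_j$ and let $v_j$ be the primitive integral generator of $r_j$. Each $\Psi_{r_i}$ is linear on every cone of $\curlyx$ and vanishes on every ray $\neq r_i$, so $g$ is linear on each $\sigma_j$, vanishes on $V_\tau$, and satisfies $g(v_1)=\omega_\curlyx(\sigma_2)$, $g(v_2)=-\omega_\curlyx(\sigma_1)$ and $g(v_j)=0$ for $j\geq 3$. Thus it is enough to find a linear function $l$ on $X$ vanishing on $V_\tau$ and taking these same values on the $v_j$: such an $l$ then agrees with $g$ on each $\sigma_j$ (both are linear there and agree on all generating rays) and hence on $\bigcup_{\sigma>\tau}\sigma$. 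Since $l$ must vanish on $V_\tau$, it factors through $V/V_\tau$, where it has to send the class of $v_j$ — which, by unimodularity of $\curlyx$, is exactly the primitive normal vector $u_{\sigma_j/\tau}$ — to the prescribed integer. The balancing condition at $\tau$ reads $\sum_j\omega_\curlyx(\sigma_j)\,u_{\sigma_j/\tau}=0$, and by local irreducibility of $X$ this is the only linear relation among the $u_{\sigma_j/\tau}$. Consequently an assignment $u_{\sigma_j/\tau}\mapsto c_j$ extends to a linear functional on $V/V_\tau$ if and only if $\sum_j\omega_\curlyx(\sigma_j)\,c_j=0$; for our values this equals $\omega_\curlyx(\sigma_1)\omega_\curlyx(\sigma_2)-\omega_\curlyx(\sigma_2)\omega_\curlyx(\sigma_1)=0$, so the functional exists, and extending it by zero on a complement of the span of the $u_{\sigma_j/\tau}$ and pulling back to $V$ yields $l$. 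Integrality is not an obstacle: the $u_{\sigma_j/\tau}$ are primitive lattice vectors, the relation among them has the integer coefficients $\omega_\curlyx(\sigma_j)$, and the compatibility $\sum_j\omega_\curlyx(\sigma_j)c_j=0$ survives rescaling that relation to a primitive one, so $l$ may be taken $\Z$-linear.

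The only genuinely substantive step is the last one — that the prescribed values on the $u_{\sigma_j/\tau}$ extend to a global integral linear form — and it is precisely there that both hypotheses enter: unimodularity identifies $u_{\sigma_j/\tau}$ with the class of $v_j$ and supplies primitivity and integrality, while local irreducibility guarantees that the balancing relation is the \emph{only} obstruction to extending the values, the obstruction itself vanishing by the elementary cancellation $\omega_\curlyx(\sigma_1)\omega_\curlyx(\sigma_2)-\omega_\curlyx(\sigma_2)\omega_\curlyx(\sigma_1)=0$. Substituting $l$ back in gives $l\cdot\Psi_\tau=g\cdot\Psi_\tau=\omega_\curlyx(\sigma_2)\Psi_{\sigma_1}-\omega_\curlyx(\sigma_1)\Psi_{\sigma_2}$, which completes the proof.
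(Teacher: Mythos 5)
Your proof takes the same route as the paper: factor out $\Psi_\tau$ (using $\Psi_{\sigma_i}=\Psi_\tau\cdot\Psi_{r_i}$) to reduce the claim to exhibiting a linear function $l$ whose values on the generators of the cones $\sigma_j>\tau$ are $\omega_\curlyx(\sigma_2)$ on $w_1$, $-\omega_\curlyx(\sigma_1)$ on $w_2$, zero on the remaining $w_j$ and zero on $V_\tau$; and then invoke local irreducibility to produce $l$. The paper is marginally slicker: it notes that local irreducibility makes $v_1,\dots,v_{d-1},w_1,w_3,\dots,w_k$ linearly independent, prescribes $l$ on that set (zero except $l(w_1)=\omega_\curlyx(\sigma_2)$) and then reads $l(w_2)=-\omega_\curlyx(\sigma_1)$ off from the balancing condition, whereas you prescribe all values and verify compatibility against the unique relation. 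These are the same argument.

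One assertion in your last paragraph is, however, false and should be dropped: it is not true that ``$l$ may be taken $\Z$-linear'' (i.e.\ $l\in\Lambda^\vee$), and the justification offered --- primitivity of the $u_{\sigma_j/\tau}$ together with integrality of the balancing coefficients --- does not suffice. A concrete counterexample: take $d=1$, $V=\R^2$ with $\Lambda=\Z^2$, $\tau=\{0\}$, and the balanced unimodular fan with rays through $w_1=(1,2)$, $w_2=(-2,-1)$, $w_3=(1,-1)$, all of weight $1$. The only linear form with $l(w_1)=1$ and $l(w_3)=0$ is $l=\tfrac{1}{3}(x+y)$, which lies outside $\Lambda^\vee$. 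Fortunately this overclaim is not needed: a linear function on $X$ only has to satisfy $l_{|\sigma}\in\Lambda_\sigma^\vee$ for each cone $\sigma$ of $\curlyx$, and on the cones $\sigma>\tau$ this is guaranteed by the integrality of the prescribed values on their generators together with unimodularity, while on all other cones the product $l\cdot\Psi_\tau$ vanishes anyway. The paper accordingly makes no $\Z$-linearity claim, and once your last sentence on integrality is deleted the proof is correct and identical in substance to the paper's.
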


\begin{proof}
Let $\sigma_3,\ldots,\sigma_k>\tau$ be the remaining maximal cones in $\curlyx$ adjacent to $\tau$. Let $v_1,\ldots,v_{d-1}, w_1,\ldots,w_k$ be the primitive integral vectors such that $\tau=\langle v_1,\ldots,v_{d-1}\rangle$ and $\sigma_i=\langle v_1,\ldots,v_{d-1},w_i\rangle$. As \[\omega_\curlyx(\sigma_2)\cdot\Psi_{\sigma_1}-\omega_\curlyx(\sigma_1)\cdot\Psi_{\sigma_2}=\Psi_{\tau}\cdot(\omega_\curlyx(\sigma_2)\cdot\Psi_{w_1}-\omega_\curlyx(\sigma_1)\cdot\Psi_{w_2}),\] we need a linear function $l$ satisfying \[l_{\mid \sigma_1}=\omega_\curlyx(\sigma_2)\cdot(\Psi_{w_1})_{\mid \sigma_1}, \  l_{\mid \sigma_2}=-\omega_\curlyx(\sigma_1)\cdot(\Psi_{w_2})_{\mid \sigma_2} \text{ and } l_{\mid \sigma_i}=0 \text{ for } i\geq 3.\]
The local irreducibility of $X$ implies that $v_1,\ldots,v_d,w_3,\ldots,w_k,w_1$ are linearly independent. Thus there exists a linear function $l$ such that $l(w_1)=\omega_\curlyx(\sigma_2)$ and $l(v)=0$ for $v\in\{v_1,\ldots,v_{d-1},w_3,\ldots,w_k\}$. By the balancing condition $l(w_2)=-\omega_\curlyx(\sigma_1)$; hence $l$ satisfies the above conditions.
\end{proof}

\begin{proof}[Proof of proposition \ref{dim0bijection}]
Let $f\in\PPOaff{d}{X}$ with $f\cdot X=0$. We choose a unimodular fan structure $\curlyx$ of $X$ such that $f\in\PPOaff{d}{\curlyx}$. Then there exist $a_{\sigma}\in\Z$ such that $\overline{f} = \sum_{\sigma\in\curlyx^{(d)}} a_{\sigma}\cdot \overline{\Psi_{\sigma}}$ in $\PPOaff{d}{X}/\MPPOaff{d}{X}$. Fix a maximal cone $\alpha\in\curlyx$. Since $\curlyx$ is connected in codimension $1$ it follows by lemma \ref{diffPsisigma} that $\overline{\Psi_{\sigma}}=\frac{\omega_{\curlyx}(\sigma)}{\omega_{\curlyx}(\alpha)}\cdot \overline{\Psi_{\alpha}}$ for all maximal cones $\sigma$. Hence $\overline{f}=\left(\sum_{\sigma\in\curlyx^{(d)}} a_{\sigma}\cdot\frac{\omega_{\curlyx}(\sigma)}{\omega_{\curlyx}(\alpha)}\right)\overline{\Psi_{\alpha}}$, and lemma \ref{Psisigma} implies that $\overline{f}=0$. 
\end{proof}

We can prove the following corollary in a similar way as theorem \ref{ausschneiden}.
\begin{corollary}
\label{poincaresmooth}
Let $X$ be a smooth tropical cycle and $k\in\{1,\dim X\}$. Then the following is an isomorphism of groups:
\[\Ak{k}{X}\rightarrow\Zk{\dim X-k}{X}, \ \ f\mapsto f\cdot X.\]
\end{corollary}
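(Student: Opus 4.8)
The plan is to imitate the proof of theorem \ref{ausschneiden}, replacing the local model $\R^n$ (and the use of theorem \ref{katzpayne}) by the local model $\trop(M)/L$ (and the use of theorem \ref{surjective} together with proposition \ref{codim1bijection} or proposition \ref{dim0bijection}). Since $X$ is smooth, we fix the atlas $\{(U_i,\phi_i,X_i)\}$ from definition \ref{defcycle}, where each $X_i$ is a matroid variety modulo lineality space. The key local input is: for $k\in\{1,\dim X_i\}$ the map $\PPOaff{k}{X_i}/\MPPOaff{k}{X_i}\to\Zaff{\dim X_i-k}(X_i)$, $g\mapsto g\cdot X_i$, is an isomorphism. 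Surjectivity is theorem \ref{surjective} (via the ``in particular'' remark after it, which gives the piecewise-polynomial version), and injectivity is proposition \ref{codim1bijection} for $k=1$ and proposition \ref{dim0bijection} for $k=\dim X_i$. One subtlety: the star $\Star_X(p)$ of a point $p$ of $X$ is (locally modelled on) a matroid variety modulo lineality space, and $\dim\Star_X(p)=\dim X$, so the local hypothesis applies with the correct $k$; I should remark that $k=1$ or $k=\dim X$ is preserved when passing to stars.

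First I would prove surjectivity. Given $C\in\Zk{\dim X-k}{X}$, I restrict $C$ to each chart to obtain $\phi_i(C\cap U_i)\subseteq W_i\subseteq|X_i|$; after possibly shrinking the $U_i$ (and composing with translations, as in the proof of theorem \ref{ausschneiden}) I may assume each such piece is the intersection of $W_i$ with a fan subcycle of $X_i$. By the local surjectivity I choose piecewise polynomials $f_i$ (pulled back by the translations) with $f_i\cdot X_i$ realising that fan subcycle near $\phi_i(p)$ for each $p$. Then $f:=\{(U_i,f_i\circ\phi_i)\}$ is a candidate cocycle with $f\cdot X=C$; I must check it satisfies the third (cocycle) condition of definition \ref{cocycle}. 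On an overlap the two local functions $f_i$ and $f_k$ give, after transport, the same local intersection cycle, so their difference lies in the kernel of the local map $g\mapsto g\cdot\Star_{X_i}(\phi_i(p))$; by the local \emph{injectivity} this difference lies in $\MPP{\Star_{X_i}(\phi_i(p))}$ for all $p$, i.e.\ in $\MPP{\phi_i(U_i\cap V_j\cap V_k)}$, which is exactly the cocycle condition. Hence $f\in\Ak{k}{X}$ and $f\cdot X=C$.

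Injectivity is then immediate from the local injectivity: if $f\cdot X=0$ then for every point $p$ of $X$ the induced piecewise polynomial $f_p\in\PPk{\Star_X(p)}$ satisfies $f_p\cdot\Star_X(p)=\Star_{f\cdot X}(p)=0$ by remark \ref{locality}, so $f_p\in\MPP{\Star_X(p)}$ by the local injectivity (proposition \ref{codim1bijection} resp.\ \ref{dim0bijection} applied to the matroid model of $\Star_X(p)$); unwinding the equivalence relation in definition \ref{cocycle}, this means $f$ is equivalent to the zero cocycle. Bilinearity/additivity of $f\mapsto f\cdot X$ is part of the earlier proposition, so the map is a group isomorphism. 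The main obstacle I anticipate is purely bookkeeping: making sure the charts can be simultaneously shrunk and translated so that each $\phi_i(C\cap U_i)$ (and each $\phi_i(f\cdot X\cap U_i)$) is genuinely cut out by a \emph{fan} cycle in $X_i$ near every point, and verifying that the locality statement of remark \ref{locality} — stated there for fan cycles — transfers verbatim to the open-subset setting of definition \ref{PPU}; both are routine but need to be spelled out, exactly as in the $\R^n$ case of theorem \ref{ausschneiden}.
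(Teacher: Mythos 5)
Your proposal matches the paper's own proof: both establish surjectivity by covering $X$ by shrunken charts on which $C$ becomes an open tropical fan in a matroid model, invoke the local surjectivity (theorem \ref{surjective} and its remark on piecewise polynomials) to pick local cut-out functions, and then use the local \emph{injectivity} (propositions \ref{codim1bijection} and \ref{dim0bijection}) to verify the cocycle compatibility on overlaps; injectivity of the global map is read off directly from the same local propositions via remark \ref{locality}. The only cosmetic slip is calling the remark following theorem \ref{surjective} an ``in particular'' remark; otherwise this is the argument in the paper.
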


\begin{proof}
The injectivity follows directly from the local statement (proposition \ref{codim1bijection} resp.\ref{dim0bijection}). Let $C\in\Zk{\dim X-k}{X}$. We choose an open cover $\{V_i^j\}$ of $X$ such that for all $i,j$ we have $V_i^j\subseteq U_i$ and the weighted set $\phi_i(C\cap V_i^j)$ corresponds to (the translation of) an open tropical fan in $\phi_i(V_i^j)$. As the tropical fan associated to $\phi_i(V_i^j)$ is a matroid variety modulo lineality space, the local statement ensures that we can find
piecewise polynomials $f_i^j\in\PPk{\phi_i(V_i^j)}$ cutting out $\phi_i(C\cap V_i^j)$. Then $f=\{(V_i^j,f_i^j\circ\phi_i)\}\in\Ak{k}{X}$ is a cocycle with $f\cdot X=C$. As in the proof of theorem \ref{ausschneiden} the difference of two of these local functions gives a zero (local) intersection on the overlaps of the open sets, so by the local statement $f$ is indeed a cocycle. 
\end{proof}

\begin{remark}
Proving the injectivity of \[\PPk{\trop(M)/L}/\MPPOaff{k}{\trop(M)/L}\rightarrow \Zaff{\dim \trop(M)/L-k}(\trop(M)/L)\] is all that remains to be done in order to generalise corollary \ref{poincaresmooth} to arbitrary codimensions $k$. Note that we also needed the injectivity of intersecting with piecewise polynomials to prove the surjectivity in the preceding proof.
\end{remark}

We conclude by using corollary \ref{poincaresmooth} to pull back points and codimension $1$ cycles along morphisms with smooth targets. This could prove useful in enumerative geometry where point conditions are often described as pull-backs of points along certain evaluation morphisms (cf.\ eg.\ \cite[definition 4.2]{GKM}). Pull-backs of points are also crucial to define families of rational curves over smooth tropical varieties as morphisms of tropical varieties (with smooth target) all of whose fibers are smooth rational curves (plus two more technical conditions) in \cite[definition 3.1]{families}.  

\begin{remark}
Let $C$ be a codimension $k$ subcycle of a dimension $d$ cycle $Y$ satisfying $\Ak{k}{Y}\cong\Zk{d-k}{Y}$. Let $h:X\rightarrow Y$ be a morphism. We can define the pull-back of $C$ along $h$ to be $h^{\ast}C:=h^{\ast}f\cdot X$, where $f$ is the (unique) cocycle satisfying $f\cdot Y=C$. If $X$ and $Y$ are smooth, this coincides with the pull-back of cycles defined in \cite[definition 8.1]{francoisrau}. Furthermore, pull-backs defined in this way clearly have the properties listed in \cite[example 8.2, theorem 8.3]{francoisrau}. 
In particular, we can define pull-backs of points and codimension $1$ cycles if $Y$ is smooth, as well as pull-backs of arbitrary cycles if $Y=\R^n$.  
\end{remark}

\begin{bibdiv}
\begin{biblist}

\bib{AR}{article}{
  author={Allermann, Lars},
  author={Rau, Johannes},
  title={First steps in tropical intersection theory},
  journal={Math.\ Z.},
  volume={264},
  number={3},
  pages={633--670},
  year={2010},
  eprint={\arxiv {0709.3705v3}},
}

\bib{AR2}{article}{
  author={Allermann, Lars},
  author={Rau, Johannes},
  title={Tropical rational equivalence on $\R ^r$},
  eprint={\arxiv {0811.2860v2}},
}

\bib{lars}{article}{
  author={Allermann, Lars},
  title={Tropical intersection products on smooth varieties},
  journal={to appear in J. Eur. Math. Soc.},
  eprint={\arxiv {0904.2693v2}},
}

\bib{vectorbundles}{article}{
  author={Allermann, Lars},
  title={Chern classes of tropical vector bundles},
  journal={to appear in Ark. Mat.},
  eprint={\arxiv {0911.2909v1}},
}

\bib{brion}{article}{
  author={Brion, Michel},
  title={Piecewise polynomial functions, convex polytopes and enumerative geometry},
  journal={Parameter Spaces, Banach Center Publ.},
  volume={36},
  date={1996},
  pages={25--44},
  }

\bib{esterov}{article}{
  author={Esterov, Alexander},
  title={Tropical varieties with polynomial weights and corner loci of piecewise polynomials},
  eprint={\arxiv{1012.5800v3}},
}

\bib{fulton}{book}{
  author={Fulton, William},
  title={Introduction to toric varieties},
  publisher={Princeton University Press},
  address={Princeton},
  pages={157 p.},
  date={1993},
}

\bib{FS}{article}{
  author={Fulton, William},
  author={Sturmfels, Bernd},
  title={Intersection theory on toric varieties},
  journal={Topology},
  volume={36},
  number={2},
  date={1997},
  pages={335--353},
  eprint={\arxiv{alg-geom/9403002v1}},
  }

\bib{families}{article}{
  author={Fran\c{c}ois, Georges},
  author={Hampe, Simon},
  title={Universal families of rational tropical curves},
  eprint={\arxiv{1105.1674v1}},
}

\bib{francoisrau}{article}{
  author={Fran\c{c}ois, Georges},
  author={Rau, Johannes},
  title={The diagonal of tropical matroid varieties and cycle intersections},
  eprint={\arxiv{1012.3260v1}},
}

\bib{GKM}{article}{
  author={Gathmann, Andreas},
  author={Kerber, Michael},
  author={Markwig, Hannah},
  title={Tropical fans and the moduli spaces of tropical curves},
  journal={Compos. Math.},
  volume={145},
  number={1},
  pages={173--195},
  year={2009},
  eprint={\arxiv{0708.2268v2}},
}

\bib{katz}{article}{
  author={Katz, Eric},
  title={Tropical intersection theory from toric varieties},
  journal={to appear in Collect. Math.},
  eprint={\arxiv{0907.2488v1}},
}

\bib{katzpayne}{article}{
  author={Katz, Eric},
  author={Payne, Sam},
  title={Piecewise polynomials, Minkowski weights, and localization on toric varieties},
  journal={Algebra Number Theory},
  volume={2},
  number={2},
  pages={135--155},
  year={2008},
  eprint={\arxiv{math/0703672v2}},
}

\bib{matroidtheory}{book}{
  author={Oxley, James G.},
  title={Matroid theory},
  publisher={Oxford University Press},
  address={Oxford},
  pages={532 p.},
  date={1992},
}

\bib{payne}{article}{
  author={Payne, Sam},
  title={Equivariant Chow cohomology of toric varieties},
  journal={Math. Res. Lett.},
  volume={13},
  number={1},
  pages={29--41},
  year={2006},
  eprint={\arxiv{math/0506376v2}},
}

\bib{hannes}{article}{
  author={Rau, Johannes},
  title={Intersections on tropical moduli spaces},
  eprint={\arxiv{0812.3678v1}},
}

\end{biblist}
\end{bibdiv}

\end{document}